\DeclareMathOperator{\lm}{lm}
\DeclareMathOperator{\lt}{lt}
\DeclareMathOperator{\lc}{lc}
\DeclareMathOperator{\lpos}{lpos}
\DeclareMathOperator{\POT}{POT}
\DeclareMathOperator{\Mon}{Mon}
\DeclareMathOperator{\Deg}{deg}
\DeclareMathOperator{\Diag}{Diag}
\DeclareMathOperator{\id}{id}
\DeclareMathOperator{\Ann}{Ann}
\DeclareMathOperator{\Quot}{Quot}
\newcommand{\N}{{\mathbb N}}
\newcommand{\K}{K}
\newcommand{\Z}{{\mathbb Z}}
\newcommand{\Q}{{\mathbb Q}}
\renewcommand{\d}{\partial }
\newcommand{\citep}{\cite}
\newcommand{\comment}[1]{}
\theoremstyle{definition}
\newtheorem{lemma}{Lemma}[section]
\newtheorem{proposition}[lemma]{Proposition}
\newtheorem{theorem}[lemma]{Theorem}
\newtheorem{corollary}[lemma]{Corollary}
\newtheorem{definition}[lemma]{Definition}
\newtheorem{example}[lemma]{Example}
\newtheorem{remark}[lemma]{Remark}
\newtheorem{algorithm}[lemma]{Algorithm}
\newtheorem{conjecture}[lemma]{Conjecture}
\begin{document}
 \setcounter{section}{0}


\title{Computing diagonal form and Jacobson normal form of a matrix using Gr{\"o}bner bases}

\author{Viktor Levandovskyy, Kristina Schindelar \\
Lehrstuhl D f\"ur Mathematik, RWTH Aachen, \\
Templergraben 64, 52062 Aachen, Germany \\
\texttt{[Viktor.Levandovskyy,Kristina.Schindelar]@math.rwth-aachen.de} }
\maketitle

\begin{abstract}
In this paper we present two algorithms
for the computation of a diagonal form of a matrix over non-commutative Euclidean domain
over a field with the help of Gr\"obner bases. This can be viewed as the pre-processing for the
computation of Jacobson normal form and also used for the computation of
Smith normal form in the commutative case.
We propose a general framework
for handling, among other, operator algebras with rational coefficients. 
We employ special "polynomial" strategy in Ore localizations of
non-commutative $G$-algebras and show its merits. In particular, for a
given matrix $M$ we provide an algorithm to compute $U,V$ and $D$
with fraction-free entries such that $UMV=D$ holds. The polynomial
approach allows one to obtain more precise information, than the
rational one e.~g. about singularities of the system. 

Our implementation of polynomial strategy shows very impressive
performance, compared with methods, which directly use fractions.
In particular, we experience quite moderate swell of coefficients and
obtain uncomplicated transformation matrices. This shows that
this method is well suitable for solving nontrivial practical
problems. We present an implementation of algorithms in \textsc{Singular:Plural}
and compare it with other available systems.
We leave questions on the algorithmic complexity of this algorithm open, but
we stress the practical applicability of the proposed method to a bigger class
of non-commutative algebras.  
\end{abstract}

\tableofcontents

\section{Introduction}

The existence and computation of normal forms of matrices 
over a ring is a fundamental mathematical question. 
The proof for the existence of a normal form is mainly constructive
and can be turned into an algorithm.
However, such a direct algorithm is not very efficient in general.
Computer algebra focuses its attention on this kind of problems, 
since they are of elementary interest but of high complexity. 

In that sense nearly any computer algebra system is able to compute
the 
Smith normal form for a matrix over a commutative
principal ideal domain ($\Z$ or $\K[x]$ for a field $\K$). 
There are many textbooks giving a theoretical background, like for instance \citep{Cohn, Newman}.

We present a method, which is based on Gr{\"o}bner bases. 
In \citep{Spanier}, there is a Gr{\"o}bner basis-based algorithm for the computation of Smith normal form of a matrix with entries in $K[x]$. Despite the fact that this approach seems to
be folklore, we were not able to find other references.

In this paper we consider non-commutative skew polynomial rings.
Such rings, among other, offer the possibility to describe time varying
systems in Systems and Control theory \citep{Eva}, \citep{IlchmannMehrmann}, \citep{Ilchmann}. 
Many known operator algebras can be realized as skew polynomial
rings or solvable polynomial rings \citep{Kredel}, some of them can be realized even as much 
easier Ore algebras \citep{ChyzakSalvy, CQR}. However, general solvable polynomial rings are hard to
tackle constructively (say, in a computer algebra system), while the class of Ore algebras
of \citep{ChyzakSalvy, CQR} is indeed restrictive. Based on the PBW algebras \citep{BGV} also known as $G$-algebras \citep{Viktor, Plural}, in Section \ref{Algebras} we propose a new class of univariate skew polynomial rings, which are obtained as Ore localizations of $G$-algebras. This framework is powerful and convenient at the same time. Moreover, it is more general than the class of Ore algebras (with defining endomorphism $\sigma$ being an automorphism) and allows algorithmic treatment of modules. 
In Proposition \ref{R_HIB} and Theorem \ref{BOre} several nice properties of such algebras (among other, these algebras are Noetherian domains with PBW basis) are established. We stress, that the computations in these algebras, especially Gr\"obner bases for modules, are algorithmic and, moreover, they can be done without using explicit fractions. 
It is important, that such algebras and computations in them can be realized in any computer algebra system, which can handle $G$-algebras or polynomial Ore algebras.

In \citep{CulianezQuadrat}, applications to systems of partial differential equations are shown and several concrete examples are introduced.
We generalize the idea, originating from \citep{Spanier}, to use Gr\"obner bases in computation
of normal forms for matrices. 
The crucial improvement is introduced in Section \ref{SecPoly}, where we show how to handle the problem in a completely fraction-free polynomial framework. 

We point out advantages of the polynomial strategy and illustrate
some of them with interesting examples in the Section \ref{Impl&Ex}, 
where we compare our implementation with other available packages.
In particular, we do comparisons with the implementation 
of algorithms, which use fractions directly.
Notably, in many examples our approach
delivers much more compact results with small coefficients.

The non-commutative analogue to the Smith form over a principal
ideal domain is the Jacobson form \citep{Jacobson} ,\citep{Cohn}. However, since the normal form problem is hard in general,
we propose the notion of a weak Jacobson form, that is a diagonal matrix, where 
the units on the diagonal will not be necessarily generated. 
Otherwise the advantage of the polynomial strategy is disturbed.
Instead, we propose the splitting of the whole process of obtaining a (strong) normal form
into the computation of a diagonal form and the computation of stronger diagonal form
from a given diagonal one. The latter, as we show in \ref{remJF}, \ref{cexShift} and \ref{qWeyl}
depends heavily on the domain one computes in, while the first algorithm is very general.

Our implementation (of weak Jacobson and Smith forms) is realized as the library \texttt{jacobson.lib} \citep{Jacobsonlib} for the computer algebra system {\sc Singular}:{\sc Plural} \citep{Singular, Plural}, which is freely available. The library has been already incorporated into the official distribution of \textsc{Singular} version 3-1-0. 

\section{Algebras, Localizations and their Properties}
\label{Algebras}

The framework of this paper is based on skew polynomial rings that are principal ideal domains. 
An important subclass of skew polynomial rings constitute
so-called polynomial Ore rings. They are non-commutative rings 
possessing an endomorphism $\sigma$ and a $\sigma$-derivation to define the commutation rule of two elements, that is giving the extension from commutative polynomial ring to non-commutative.
This kind of rings is used in analyzing the structure of analytic equations, 
like linear ordinary or partial differential equations or partial shift or difference equations 
with rational or polynomial coefficients, see Example \ref{interestingOreAlg}.
The name is inspired by {\O}ystein Ore, who introduced and studied this kind of rings. 
These rings were also studied, for instance in \citep{ChyzakSalvy} and \citep{McConnellRobson}. \\
Let $K$ be a field and $A$ be a $K$-algebra.
Further let $\sigma: A \rightarrow A $ be a ring endomorphism. Then 
the map $\delta : A \rightarrow A$ is called 
$\sigma$-\textbf{derivation}, if $\delta$ is $K$-linear and satisfies the skew Leibniz rule 
\begin{displaymath}
 \delta(a b) = \sigma(a) \delta(b) + \delta(a) b \; \mbox{ for all } \;a, b \in A.
\end{displaymath}
For a $\sigma$-derivation $\delta$ the ring 
$A[\partial;\sigma, \delta] $ consisting of all polynomials 
in $\partial$ with coefficients in $A$ with the usual addition 
and a product defined by the commutation rule
\begin{displaymath}
\partial a = \sigma(a) \partial + \delta(a) \; \mbox{ for all } \; a \in A
\end{displaymath}
is called \textbf{skew polynomial ring} or an \textbf{Ore extension} of $A$ with $\d$ subject
to $\sigma, \delta$. \\

It is easy to see, that any non-zero element $a\in A[\partial;\sigma, \delta]$ can be written as 
$a=a_n\partial^n + \dots + a_1\partial + a_0$, where $n \in N_0$ 
and $a_i \in A$. We call $n$ the \textbf{degree} of $a$, sometimes it
is also called the \textbf{order} of $a$.

In describing $K$-algebras via finite sets of generators $G$ and relations $R$, we write
$A = K\langle G \mid R \rangle$. It means that $A$ is a factor algebra of the free
associative algebra, generated by $G$ modulo the two-sided ideal, generated by $R$.
Hence yet another notation is $A = K\langle G \rangle / \langle R \rangle$.

\begin{example}\
\begin{itemize}
\item Defining $\sigma:=\id_A$ and $\delta:=0$ we see, 
that $(K[x_1, \dots, x_n]) [\partial;\sigma, \delta] = K[x_1, \dots, x_n, \partial]$ 
and $K(x_1, \dots, x_n) [\partial;\sigma, \delta] = K(x_1, \dots, x_n)[\partial]$.
\item Let $A=K[x] $ for a field $K$ of characteristic 0, 
$\sigma:=\id_{K[x]}$ and $\delta :=\frac{\partial}{\partial x}$. 
\begin{displaymath}
 W_1(K):=K[x][ \partial; \id_{K[x]},  \frac{\partial}{\partial x}]
= K\langle x, \d \mid \partial x = x \partial +1 \rangle
\end{displaymath}
is called the first \textbf{polynomial Weyl algebra}.
\end{itemize}
\end{example}

\begin{proposition}\citep{BGV}\label{R_HIB}
\ Let $A$ be a division ring, $\sigma: A\to A$ be an endomorphism and $R=A[\partial;\sigma, \delta]$ be an Ore extension with a $\sigma$-derivation $\delta$.\\
If $\sigma$ is injective (respectively bijective), then 
\begin{itemize}
\item (PID) $R$ is a left (resp. right) principal ideal domain.
\item (Bezout's Theorem) for any non-zero $a,b\in R$ there exists the right (resp. left) greatest common divisor $g_r$ (resp. $g_{\ell}$) of $a,b$ and there exist $s,t \in R$, such that $g_r = sa+ tb$ (resp. $s',t'$, such that $g_{\ell} = as' +bt'$).
\item (ED) $R$ is a left (resp. right) Euclidean domain.
\end{itemize}
\end{proposition}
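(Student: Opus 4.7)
The proof plan follows the classical pattern for Euclidean-type arguments, adapted to keep careful track of the twist by $\sigma$.

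First, I would establish the basic arithmetic of the degree function on $R=A[\partial;\sigma,\delta]$. The commutation rule gives, by induction, $\partial^n a = \sigma^n(a)\partial^n + (\text{lower order terms})$ for any $a\in A$. Hence for $f = \sum a_i \partial^i$ and $g = \sum b_j\partial^j$ of degrees $n$ and $m$ respectively, the leading coefficient of $fg$ is $a_n\sigma^n(b_m)$. Since $\sigma$ is injective and $A$ is a division ring, $\sigma^n(b_m)\neq 0$ whenever $b_m\neq 0$, so $a_n\sigma^n(b_m)\neq 0$. This simultaneously shows $\deg(fg)=\deg(f)+\deg(g)$ and that $R$ is a domain, and it supplies the degree function needed for the Euclidean property.

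Second, I would prove left-Euclidean division: for $f,g\in R$ with $g\neq 0$ there exist $q,r\in R$ with $f = qg + r$ and $\deg r < \deg g$. The argument is by induction on $\deg f$. If $\deg f < \deg g$, take $q=0$, $r=f$. Otherwise, writing $n=\deg f$, $m=\deg g$ and leading coefficients $a_n, b_m$, set
\[
c := a_n\,\sigma^{n-m}(b_m)^{-1}\in A,
\]
which exists because $A$ is a division ring and $\sigma^{n-m}(b_m)\neq 0$. Then $c\partial^{n-m}\cdot g$ has leading term $a_n\partial^n$, so $f - c\partial^{n-m} g$ has strictly smaller degree, and induction finishes the step. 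This establishes the left Euclidean property (ED); the right-sided version, under the stronger bijectivity assumption on $\sigma$, is symmetric, using $c' := \sigma^{-(n-m)}(b_m^{-1}a_n)$ together with right-multiplication by $\partial^{n-m}$ — here bijectivity is essential so that $\sigma^{-1}$ is available.

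Third, the remaining two items follow formally from ED. For the PID statement, given a nonzero left ideal $I\subseteq R$, pick $g\in I\setminus\{0\}$ of minimal degree and show by Euclidean division that every $f\in I$ reduces to $0$ modulo $g$, so $I = Rg$. For Bezout, I would mimic the extended Euclidean algorithm: starting from $a,b$, iteratively replace the pair by $(b,r)$ where $a = qb + r$; the degrees strictly decrease, so the process terminates in finitely many steps with a generator $g_r$ of $Ra+Rb$, and reading off the combinations along the way expresses $g_r = sa+tb$. Again, the right-handed versions are symmetric once bijectivity is assumed.

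The main conceptual obstacle is simply the $\sigma$-twist in the leading coefficient computation: one must invert $\sigma^{n-m}(b_m)$ rather than $b_m$ itself, and this is exactly where the hypotheses on $\sigma$ (injective for the left side, bijective for the right side) and on $A$ (division ring) both enter in an essential way. Once the degree formula $\deg(fg)=\deg f+\deg g$ and the twisted Euclidean step are in place, everything else is the standard division-ring-free derivation of PID and Bezout from ED.
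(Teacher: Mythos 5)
The paper gives no proof of this proposition: it is quoted from the reference \cite{BGV}, so there is no in-paper argument to compare against. Your proposal is the standard proof found there (and in McConnell--Robson): establish $\deg(fg)=\deg f+\deg g$ via the twisted leading coefficient $a_n\sigma^n(b_m)$ (which also gives the domain property), prove one-sided Euclidean division by induction on degree, and then deduce PID and Bezout formally from ED via a minimal-degree generator and the extended Euclidean algorithm. This is correct, and you place the hypotheses (injectivity of $\sigma$ for the left-sided statements, bijectivity for the right-sided ones, $A$ a division ring) exactly where they are needed.

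One small slip in the right-division step: if the quotient is taken in the form $c'\partial^{n-m}$, then the leading term of $g\cdot c'\partial^{n-m}$ is $b_m\,\sigma^{m}(c')\,\partial^{n}$, so the correct constant is $c'=\sigma^{-m}\bigl(b_m^{-1}a_n\bigr)$, not $\sigma^{-(n-m)}\bigl(b_m^{-1}a_n\bigr)$. This does not affect the structure of the argument --- bijectivity of $\sigma$ is still precisely what makes $c'$ available --- but the exponent should be corrected.
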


Hence, when $\sigma$ is bijective, there are left and right Euclidean division algorithms.

In the next example we enlist some interesting skew polynomial rings 
(which are Ore algebras indeed, see \citep{ChyzakSalvy}). These rings are of great interest in
applications, all of them can be addresses with
our implementation, see Section
\ref{Impl&Ex}.

\begin{example}\label{interestingOreAlg}
Let $A=K(x)$, where $K$ is a field of characteristic $0$.
\begin{itemize}
\item Let $\sigma:=\id_{K (x)}$ and  $\delta :=\frac{\partial}{\partial x}$. Then
\begin{displaymath}
 B_1(K):=A[ \partial; \id_{K(x)},  \tfrac{\partial}{\partial x}]
= K(x)\langle \d \mid \partial x = x \partial +1 \rangle
\end{displaymath}
is called the first \textbf{rational Weyl algebra}.  \\

\item The first \textbf{rational difference algebra} is defined by
$$ \mathcal{S}_1 :=A \left[ \Delta; \sigma, \delta \right]
= K(x)\langle \Delta \mid \Delta x = x \Delta + \Delta + 1 \rangle,
$$
where $\sigma (p(x)) =  p(x + 1)$ and $\delta(p)=\sigma(p)-p$ for all 
$p \in K(x)$.
\end{itemize}

Let $q\not= 0$ be a unit (a parameter) in the ground field.

\begin{itemize}
\item Let $\sigma(p(x))=p(qx)$ and
$\delta :=(\frac{\partial}{\partial x})_q$, $\delta(f(x)) = \frac{f(qx) - f(x)}{(q-1)x}$.
 Then
\begin{displaymath}
 W^q_1(K):=A[ \partial; \sigma,  (\tfrac{\partial}{\partial x})_q]
= K(x)\langle \d \mid \partial x = q \cdot x  \partial +1 \rangle
\end{displaymath}
is called the first \textbf{rational $q$-Weyl algebra}.

\item The first \textbf{rational $q$-difference algebra} is defined by
$$ \mathcal{Q}:= A [\partial; \sigma, \delta]
= K(x)\langle \d \mid q \cdot x \partial + (q-1)x \rangle, $$
where $\sigma( p )=p(q x)$ and $\delta(p)= p(qx)-p(x)$. 
\end{itemize}
\end{example}


Indeed, we can work within the more general algebraic framework as follows. \\

Let $S$ be a multiplicatively closed set (see \citep{McConnellRobson}) in a Noetherian integral domain $A$,
such that $0 \not\in S$. $S$ is called an \textbf{Ore set} in $A$, if for all $s_1 \in S, a_1 \in A$
there exist $s_2\in S, a_2 \in A$, such that $a_1 s_2 = s_1 a_2$. Then one can see, that formally (that is, allowing fractional expressions) $s_1^{-1} a_1 = a_2 s_2^{-1}$ holds. 

Then one defines a \textbf{ring of fractions} or an \textbf{Ore localization} of $A$ with respect to $S$ to be a ring $A_S$ (often denoted as $S^{-1} A$) together
with an injective homomorphism $\phi: A \to A_S$, such that 
\begin{itemize}
\item[(i)] for all $s\in S$, $\phi(s)$ is a unit in $A_S$, 
\item[(ii)] for all $f \in A_S$, $f = \phi(s)^{-1} \phi(a)$ for some $a\in A, s\in S$.
\end{itemize}

The Ore property of $S$ in $A$ guarantees, that any left-sided fraction
can be written (non-uniquely!) as a right-sided fraction. Moreover, given 
$a_1,\ldots,a_m \in A$ and $s_1,\ldots,s_m \in S$, there exist
$a'_1,\ldots,a'_m \in A$ and $s' \in S$, such that
$a_i s' = s_i a'_i$ holds for each $i$. Thus there exist common right and
common left multiples.

\begin{remark}
Why such localizations are important? Among many motivating connections let us
state the following. Given an $A$-module homomorphism $\varphi: M \to N$, where
$M,N$ are finitely generated. Then, if $S^{-1} A$ exists, one has an induced
homomorphism of $S^{-1} A$-modules $S^{-1} \varphi: S^{-1} M \to S^{-1} N$. 
However, if one finds an appropriate multiplicatively closed Ore set $\tilde{S}$ in $A$ and proves that $\tilde{S}^{-1} \varphi: \tilde{S}^{-1} M \to \tilde{S}^{-1} N$ is not
an isomorphism, it implies that $M \not\cong N$ as $A$-modules. This gives 
an important tool to check the isomorphy of modules. In contrast with common
localizations of commutative ring at complements of prime ideals, we do not
know a priori for which $S$ we are looking for and how many different $S$
should we examine.

Note, that the question,
whether two modules are isomorphic, is one of the fundamental questions
in algebra. It is known to be not algorithmic in general, hence any partial
algorithmic answer to this question is of big importance.
\end{remark}

\begin{definition}
\label{GalgLie}
Let 
 $A$ be a quotient of the free associative algebra 
$\K\langle x_1,\ldots,x_n\rangle$ by the two-sided ideal $I$, generated by
the finite set $\{x_jx_i - c_{ij} x_ix_j-d_{ij} \} $ for all $ 1\leq i<j \leq n$, where
$c_{ij}\in \K^*$ and $d_{ij}$ are polynomials in $x_1,\ldots,x_n$. Without loss of generality \citep{Viktor} we can assume that $d_{ij}$ are given in terms of standard monomials $x_1^{a_1} \ldots x_n^{a_n}$.
$A$ is called a {\em $G$--algebra} \citep{LS03,Viktor}, if \\
$\bullet$ for all $ \; 1\leq i < j < k \leq n$ the expression
$c_{ik}c_{jk} \cdot d_{ij}x_k - x_k d_{ij} + c_{jk} \cdot x_j d_{ik} - c_{ij} \cdot d_{ik} x_j + d_{jk}x_i 
- c_{ij}c_{ik} \cdot x_i d_{jk}$
reduces to zero modulo $I$ and \\
$\bullet$ there exists a monomial ordering $\prec$ on $\K[ x_1,\ldots,x_n]$,
such that for each $i < j$, such that $d_{ij}\not=0$, $\lm(d_{ij}) \prec x_i x_j$ . Here, $\lm$ stands for the classical notion of leading monomial of a polynomial from $\K[ x_1,\ldots,x_n]$.
\end{definition}

We call an ordering on a $G$-algebra \textbf{admissible}, if it satisfies second
condition of the definition.
A $G$-algebra $A$ is Noetherian integral domain \citep{LS03}, hence there exists
its total two-sided ring of fractions $\Quot(A) = A_{A\setminus\{0\}}$, which is a division ring (skew field). Assume that $A$ is generated by
$x_1,\ldots,x_{n+1}$ and suppose that the set 
$\Lambda_n(A) = \{ \lambda = \{ i_1,\ldots,i_n\} \mid i_1 < \ldots < i_n, $ 
$\K\langle x_{i_1},\ldots,x_{i_n} \mid I_{\lambda} \rangle$ is a $G$-algebra$\}$ is not empty, 
where $I_{\lambda} = \{x_j x_i - c_{ij} x_ix_j-d_{ij} \mid \ i,j \in \lambda, i < j \}$.

For any $\lambda=\{i_1,\ldots,i_n\}\in\Lambda_n$, let us define
$B_{\lambda}$ to be a $G$-algebra, generated by $\{ x_{i_1},\ldots,x_{i_n} \}$. 

\begin{theorem}\label{BOre}
Let $A$ be a $G$-algebra in variables $x_1,\ldots,x_n, \d$ and assume
that $\lambda = \{x_1,\ldots,x_n\} \in \Lambda_n$. 
Moreover, let $B := B_{\lambda}$ and $B^* = B\setminus\{0\}$. 
Suppose, there exists an admissible monomial ordering $\prec$ on $A$, satisfying
$x_k \prec \d$ for all $1\leq k \leq n$. Then the following holds
\begin{itemize}
\item $B^*$ is multiplicatively closed Ore set in $A$. 
\item $(B^{*})^{-1} A$ (Ore localization of $A$ with respect to $B^*$) can be presented as an Ore extension of $\Quot(B)$ by the variable $\d$.
\end{itemize}
\end{theorem}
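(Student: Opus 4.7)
The plan is to build the Ore extension $R := \Quot(B)[\partial;\sigma,\delta]$ directly from the commutation rules of $A$, realize $A$ as a subring of $R$, and then deduce both claims from the observation that every element of $R$ has the form $s^{-1}a$ with $s\in B^*$ and $a\in A$.

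First I would extract $\sigma$ and $\delta$ from the $G$-algebra relations. Because the admissible ordering $\prec$ is multiplicative and $x_k\prec\partial$, one has $x_i\partial\prec\partial^2$, so the element $d_i$ in $\partial x_i = c_i x_i\partial + d_i$ has $\partial$-degree at most $1$. Writing $d_i = f_i\partial + e_i$ with $f_i,e_i\in B$, the constraint $\lm(d_i)\prec x_i\partial$ forces $\lm(f_i)\prec x_i$. Setting $\sigma(x_i) := c_i x_i + f_i$ and $\delta(x_i) := e_i$ and extending by induction along the PBW basis of $B$, the identity $\partial b = \sigma(b)\partial + \delta(b)$ holds for all $b\in B$; associativity $\partial(b_1 b_2) = (\partial b_1)b_2$ in $A$ then forces $\sigma$ to be a ring endomorphism and $\delta$ a $\sigma$-derivation. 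Since $B$ is itself a $G$-algebra, it is a Noetherian domain, $B^*$ is multiplicatively closed, and $\Quot(B)$ exists.

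The step I expect to be the main obstacle is showing that $\sigma$ is an automorphism of $B$. Injectivity is routine via the leading term in $A$: for $b\neq 0$ the identity $\partial b=\sigma(b)\partial+\delta(b)$ together with $\lm_A(\partial b)=\lm_A(b)\,\partial$ forces $\sigma(b)\neq 0$. For surjectivity I would exploit the multiplicativity of $\lm$ on the $G$-algebra $B$ combined with $\lm(f_i)\prec x_i$ to conclude that $\lm(\sigma(x^\alpha)) = x^\alpha$ with nonzero leading coefficient $\prod c_i^{\alpha_i}\in K^*$. A Noetherian induction on the well-ordering $\prec$ of the standard monomials of $B$ then cancels the leading term of an arbitrary $b\in B$ against an appropriate $\sigma$-image, yielding $\sigma(B)=B$. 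Once $\sigma$ is bijective, it extends to an automorphism of $\Quot(B)$ and $\delta$ extends uniquely by the standard quotient-rule formula, so $R := \Quot(B)[\partial;\sigma,\delta]$ is a well-defined Ore extension (and a PID by Proposition~\ref{R_HIB}). The natural map $A\to R$ sending generators to themselves respects the defining relations and is injective because the PBW basis $\{x^\alpha\partial^k\}$ of $A$ is $K$-linearly independent in $R$.

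To conclude, any element $\sum r_k\partial^k\in R$ becomes $s^{-1}\sum b_k\partial^k = s^{-1}a$ once the $r_k\in\Quot(B)$ are brought over a common left denominator $s\in B^*$, which exists by the left Ore property of $B^*$ in $B$. For the Ore condition as stated in the paper, I would argue by induction on $n := \deg_\partial a$: given $a = a_n\partial^n + \tilde a$ and $s\in B^*$, apply the right Ore property of $B$ to $a_n$ and $s$ to obtain $t\in B^*$, $c\in B$ with $a_n t = sc$; setting $s'_0 := \sigma^{-n}(t)\in B^*$ (available because $\sigma$ is now known to be bijective), the commutation rules give $as'_0 = sc\partial^n + r$ with $\deg_\partial r < n$. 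The inductive hypothesis applied to $r$ produces $s'_1\in B^*$ and $a''\in A$ with $rs'_1 = sa''$, so that $s' := s'_0 s'_1\in B^*$ and $a' := c\partial^n s'_1 + a''\in A$ satisfy $as' = sa'$. Combined with the normal form $s^{-1}a$ for elements of $R$, this establishes that $B^*$ is an Ore set in $A$ and identifies $(B^*)^{-1}A$ with $R$, which is precisely the Ore extension of $\Quot(B)$ by $\partial$.
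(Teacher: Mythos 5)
Your proposal is correct, but it reaches the key Ore property by a genuinely different route than the paper. The extraction of $\sigma$ and $\delta$ from the relation $\d x_j = c_j x_j \d + a_j\d + b_j$ is identical, but the paper does not verify the Ore condition by hand: it uses $\lm(a_j)\prec x_j$ and $\lm(b_j)\prec x_j\d$ to build (via a weight argument) a block elimination ordering $\prec_{\d}$ with $\d \gg x_j$ that is still admissible for $A$, and then invokes Proposition 28 of \cite{GML}, which says that the existence of such an elimination ordering forces $B^*$ to be an Ore set in $A$; the presentation of $(B^*)^{-1}A$ as $\Quot(B)[\d;\sigma,\delta]$ is then read off. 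You instead prove the left Ore condition directly: you first show that $\sigma$ is an automorphism of $B$ itself (leading monomials are preserved because $\lm(a_j)\prec x_j$, injectivity follows, and surjectivity comes from induction along the well-ordering), a fact the paper never proves since it only asserts that $\sigma$ is an automorphism of $\Quot(B)$, and then you run an induction on $\deg_{\d} a$ using $\sigma^{-n}(t)$ and the right Ore property of the Noetherian domain $B$ to produce $as'=sa'$, finishing by checking conditions (i) and (ii) in the definition of the localization to identify $(B^*)^{-1}A$ with the Ore extension. Your route buys a self-contained argument with an explicit normal form $s^{-1}a$ and the extra (useful) fact $\sigma(B)=B$, on which your $\sigma^{-n}(t)\in B^*$ step genuinely depends; the paper's route buys brevity and rests on a criterion from \cite{GML} stated for a much more general situation, which is what makes the generalization alluded to in Remark \ref{AB*} (several polynomial variables, PBW rings) plausible, whereas your degree induction is tailored to a single Ore variable $\d$ -- exactly the setting of the theorem, so this is not a defect here.
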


\begin{proof}
Since $B$ is a $G$-algebra itself, it is an integral domain, hence $B^*$ is multiplicatively closed
and does not contain zero.
Since $A$ and $B$ are $G$-algebras and $\prec$ is an admissible ordering,
for a relation $\d x_j = c_{j} x_j \d + d_{j}$ with $c_{j} \in K^*$
and a polynomial $d_{j}\in A$ holds $d_j =0$ or $\lm(d_{j}) \prec x_j \d$. Since $x_j \prec \d$,
then $x_j \d \prec \d^2$, hence 
$d_{j}$ is at most linear in $\d$.
Writing $d_{j} = a_{j} \cdot \d + b_{j}$ for $a_{j},b_{j} \in B$,
we define $c'_{j} = c_{j} x_j + a_{j}$ and thus we obtain a
relation $\d x_j = c'_{j} \d + b_{j}$, where $x_j, c'_{j}, b_{j} \in B$.

Then, by defining $\sigma(x_j) = c_{j}x_j + a_j$ and $\delta(x_j) = b_j$
for all $1\leq j \leq n$, we see, that $\sigma$ is an automorphism of $\Quot(B)$.
Thus an Ore extension $\Quot(B)[\d; \sigma, \delta]$ is indeed another 
presentation of $(B^{*})^{-1} A$ 
as soon as $B^*$ is an Ore set in $A$.

Since $\lm(d_j) = \lm(a_j \d + b_j) \prec x_j \d$, 
both $\lm(a_j) \prec x_j$
and 
$\lm(b_j) \prec x_j \d$ hold. The latter implies, that there
exist positive weights $\omega$ and $w_1,\ldots,w_n$ for variables $\{\d, x_1,\ldots,x_n\}$,
such that for $\lm(a_j) = x^{\alpha}$ and  $\lm(b_j)=x^{\beta}$ one has
$\sum_i w_i \alpha_i \leq w_j$ and $\sum_i w_i \beta_i \leq w_j + \omega$.
In particular, this can be achieved by setting $\omega$ big enough. Then we follow
the recipe from \citep{BGV} and construct a block ordering from this setting.
Consider an ordering
$\prec_{\d}$ on $A$, which is a block ordering for blocks of variables 
$\{\d\}, \{x_1,\ldots,x_n\}$. It means that $\d \gg x_j$ for all $j$, that is
the variable $\d$ is bigger than any power of $x_j$. The second block
is an ordering $\prec_B$ on $B$, for which $\lm(a_j) \prec_B x_j$ holds. For instance,
one can take $\prec_B$ to be the restriction of $\prec$ to $B$.
Then $\lm(d_j) = \max_{\prec_{\d}} (a_j \d, b_j) \prec_{\d} x_j \d$ holds,
hence $\prec_{\d}$ is admissible ordering on $A$. 
From the Proposition 28 of \citep{GML} (which holds for much more general situation),
the existence of such a block ordering as $\prec_{\d}$ implies, that the set $B^*$ is an Ore set in $A$. 
\end{proof}

\begin{remark}
\label{AB*}
Note, that by construction $A_{B^*} := (B^{*})^{-1} A$
is a Euclidean (principal ideal) domain by the 
Proposition \ref{R_HIB}.
In particular, all but one variables are invertible (we call them also \textit{rational} variables). We call
non-invertible variables \textit{polynomial}.
In a more general setting,
we like to present localizations of the type $A_{B^*}$, where $B$ is a sub-$G$-algebra of $A$,
 as a ring of solvable type \citep{Kredel} or, equivalently, as a PBW ring \citep{BGV}. In the
case of several polynomial variables, the analogue to the 
Theorem \ref{BOre} seem to be much more involved.
\end{remark}

\begin{example}
To illustrate the Theorem \ref{BOre}, consider the difference algebra $\mathcal{S}_1 := K \langle x, \Delta \mid \Delta x = x \Delta + \Delta + 1 \rangle$. Since $\Delta \prec x \Delta$ is a consequence of $1\prec x$
(we assume we are dealing with well-orderings only), $\mathcal{S}_1$ can be localized at both $K[x]^*$ and $K[\Delta]^*$. However, the algebra, associated with the operator of partial integration
$\mathcal{I}_1 := K \langle x, I \mid I x = x I - I^2 \rangle$ can be localized only at $K[I]^*$
but not at $K[x]^*$, since $I^2 \prec x I$ is a consequence of $I \prec x$ and any ordering, satisfying $x \prec I$ is not admissible for $\mathcal{I}_1$.
\end{example}

For many problems in module theory and in applications we would like to analyze
complicated problems via localizing at big subalgebras. In the situation as above,
we obtain non-commutative Euclidean domain as the result, 
hence we are interested in computing Jacobson form in this setting. One of the complications, which arise in constructive handling of objects over such algebras, is quite hard arithmetics in the skew field. Several fundamental
questions like the transformation of a left fraction into the right one (which is possible, since the Ore condition is satisfied), simplification of a one-sided fraction etc. require quite nontrivial
and complex algorithms (like computation of syzygy modules and so on) to be used, see for instance \citep{ApelDiss}. 
Even in the commutative case the computations (even with one variable) 
over a transcendental extension by several generators are still nontrivial and resource-consuming for most computer algebra systems. Hence saying ``ring $R$ is a (non-commutative) Euclidean domain'' does not automatically mean ``computations in $R$ are easy''.


 \begin{remark}
 Let us come back to the justification of terminology. Usually, speaking on 
 ``operator algebra with polynomial coefficients'', one means that one works 
 with the set of operators $\d_1,\ldots,\d_m$ over a commutative polynomial ring, say, $K[x_1,\ldots,x_n]$.

 By saying ``operator algebra with rational coefficients'' one addresses 
 an Ore extension of $K(x_1,\ldots,x_n)$ by the operators $\d_i$.

 It is important to mention, that
 $K(x_1,\ldots,x_n)$ is a localization of $K[x_1,\ldots,x_n]$ with respect to multiplicatively closed
 set $K[x_1,\ldots,x_n]\setminus\{0\}$. Thus it is enough to define an algebra with polynomial
 coefficients and then speak on different localizations of it. Therefore the notion of Ore localization reveals the origin of various ``rational'' coefficients and allows to treat different localizations (among them e.g. passage to the torus $K[x_1^{\pm 1},\ldots,x_n^{\pm 1}]\subset K(x_1,\ldots,x_n)$) uniformly.
 \end{remark}

\section{Gr{\"o}bner Bases in the Computation of a Diagonal Form}

\subsection{Yoga with Gr\"obner Bases}

Let us
give a short introduction to non-commutative 
Gr{\"o}bner basis theory, which has been studied by e.~g. \citep{Chyzak, Kredel, Viktor}. 
Suppose, that there is a $G$-algebra $R_*$ over a field $\K$, which is generated by $x_1,\ldots,x_n,\d$,
such that $R_* = A_*[\d; \sigma, \delta]$ is an Ore extension of a $G$-algebra $A_*$, generated by $\{x_i\}$.
By using the lower index $*$, we point out that we deal with structures, objects in which always
have a polynomial presentation.
A nice property of a $G$-algebra is that as a $K$-vector space it is generated by
\textbf{monomials} of $R_*$:
$$
\Mon(R_*) = \{ x_1^{\alpha_1} \cdot \ldots \cdot x_n^{\alpha_n}  \d^k \mid \alpha \in \N^n, k\in \N\}
= \{ x^{\alpha} \d^k \mid x^{\alpha} \in \Mon(A_*), k\in \N\}.$$
Based on a module ordering we define leading coefficient ($\lc$), leading monomial ($\lm$), 
leading term ($\lt $) and leading 
position ($\lpos$) notions as usual. Let $e_i:=(0, \ldots, 1, \ldots, 0)$
be the $i$-th unit vector.

In this paper
we will compute Gr{\"o}bner basis of modules over $R_*$ with respect to an monomial module ordering $\POT$ (position-over-term), defined as follows. For $r, s \in \Mon(R_*)$,
\begin{align}\label{orderingPOT}
& r e_i < s e_j \;\; \Leftrightarrow \;\; i<j \mbox{ or if } i=j \mbox{ then }  
r < s,
\end{align}
and $r<s$ with respect to an admissible
well-ordering on $R_*$, eliminating $\partial$, that is satisfying $\partial \gg x_n > \dots > x_1 \mbox{ on } R_*$. 

In $R$, a Gr\"obner basis is computed with respect to the induced POT ordering, which takes only degree of $\partial$ into account since $\Mon(R) = \{ \partial^k \mid k\in \N\}$.

We call $a\in R_*$ a \textbf{strict left (resp. right) divisor} 
of $b\in R_*$ if and only if $\exists \ f \in R_*$ such that $af = b$ (resp. $fa=b$).
Extending this notation to $R_*^p$ requires that both elements $a,b \in R_*^p$
have the same leading position. Moreover, $a$ is said to be a \textbf{proper} strict divisor of $b$, if either $b=af$ or $b=fa$ holds, where $f$ is not an unit in $R_*$. 
For two monomials $m_1,m_2 \in R_*$ we write
$m_1 \leq m_2$ for the comparison with the fixed monomial ordering. 
We say that $m_1$ \textbf{divides} $m_2$, if each exponent of $m_1$ is not greater than the
corresponding exponent of $m_2$.

\begin{definition}
Let $M$ be a left submodule of $R_*^p$ and $<$ be a monomial module ordering on $R_*^p$.
A finite subset $G \subset M$ is called a \textbf{Gr\"obner Basis} of $M$ with respect to $<$,
if for every $f \in M\setminus\{0\}$ there exists a $g \in G$, so that $\lm(g)$ divides $\lm(f)$.
\end{definition}

A Gr{\"o}bner basis $G$ is called \textbf{reduced} if and only if 
for any pair of polynomials $h \not= f \in G$, the leading monomial $\lm(h)$ does not divide
any monomial of $f$. It can be shown, that a normalized (that is with leading coefficients 1) 
reduced Gr\"obner basis is unique for a fixed ordering.
We recall the common property of a Gr{\"o}bner basis to be, in particular, a generating set.  

\begin{remark}\label{submoduleGroebner}
Let $M \subseteq R_*^p$ with a Gr{\"o}bner basis $G$ and $f \in M$. 
Define the submodule $S$ of $M$ to be generated by all $s \in G$
such that $ \lm(s) \leq \lm(f)$. Then $f \in S$. 
\end{remark}

\subsection{Working with Left and Right Modules}

\noindent
\textbf{Opposite algebra}. 
In order to work with left and right modules over an associative
$K$-algebra $A$, one has to use both $A$ and its opposite algebra $A^{op}$ in general.
Recall, that $A^{op}$ is the same vector space as $A$, endowed with the opposite
multiplication: $\forall \ a,b \in A^{op}$, 
$a \star_{A^{op}} b = b \cdot a$.
A natural opposing map makes from a right (resp. left) $A$-module 
a left (resp. right) $A^{op}$-module. 
There is an algorithmic procedure to set up an opposite algebra to a given
$G$-algebra, see \citep{Viktor}.\\

\noindent
\textbf{Involutive anti-automorphism}. Alternatively, for ``swapping sides'' one can employ an
anti-automorphism $\theta$ of $A$ , that is 
a $K$-linear map, which obeys $\theta(a b) = \theta(b) \theta(a)$ for all $a,b \in A$,
which is involutive, that is $\theta^2 = \id_A$. Often such an anti-automorphism is called \textbf{involution}.
In classical operator algebras, particularly simple involutions are known \citep{CQR}.
Moreover, it is possible to determine linearly presented involution of a $G$-algebra via an algorithm (Levandovskyy et~al., unpublished, see \textsc{Singular} library \texttt{involut.lib} \citep{Involutlib} for an implementation). 
A constructive advantage of using involution versus using opposite algebra lies in the fact, that
one does not need to create opposite algebra and make to an object its opposite.
Instead, we apply an involution to an object and remain in the same ring. One
application of involution means that the object we deal with change its side from
left to right or vice versa. 

An involution can be defined on matrices as follows. Let $\theta:A\to A$ 
 be an involution as above.
 We define the map
 \begin{displaymath}\label{transposition}
   \widetilde{\theta}:A^{p \times q} \rightarrow A^{q \times p}, \;\;\; M 
 \mapsto ( \theta(M))^{T},
 \end{displaymath}
 where $M^T$ is the transposed matrix of $M$ and $\theta(M) =[ 
 \theta(M_{ij}) ]$ for $1\leq i \leq p$ and $1\leq j \leq q$.\\
One can easily show that $ (\theta(B \cdot C))^T = (\theta (C))^T \cdot 
(\theta(B))^T$  for $B \in A^{p \times q}$, $C \in A^{q \times k}$. Applied twice, we 
 get $B\cdot C$ back. \\

\noindent
\textbf{Diagonalization}. Let $R$ be a $\K$-algebra and a non-commutative Euclidean PID. 
Recall, that a matrix $U \in  R^{ p \times p}$ is called \textbf{unimodular} if and only 
if there exists $U^{-1} \in  R^{ p \times p}$ such that $U U^{-1}= U^{-1}U = \id_{p \times p}$.
Let $M \in R^{p \times q}$ and assume, without loss of generality, that $p>q$. 
Then one can show, that there exist unimodular matrices $U \in R^{ p \times p}$ and 
$V \in R^{q \times q}$ such that 
\begin{displaymath}
 U M V = \left[ 
\begin{array}{ccc} 
m_1 & & 0 \\  
& \ddots &  \\ 
0 & & m_q \\ 
 & 0_{p-q} & 
\end{array} \right].
\end{displaymath}

There are several ways to prove this statement, all based on the Euclidean (and thus PID) property of the underlying ring. From now on, we assume that $R$ is a localization of a $G$-algebra as in Remark \ref{AB*}.
We present algorithms to obtain diagonal form together with unimodular transformation matrices via Gr{\"o}bner bases. 
The main idea about the computation is the sequential alternation
between the computation of a reduced Gr{\"o}bner basis of the submodule, generated
by, say, the rows of a matrix and acting by the involution $\widetilde{\theta}$ on a submodule.
In the PhD thesis \citep{Spanier} this idea was applied to $K[x]$ (of course, without using 
an involution $\theta$, which is superfluous in that case) in order to compute a Smith normal form. \\

In the following, by ${}_{R}M$ we denote the left $R$-module generated by the rows of 
a matrix $M$.
Further on, by $\mathcal{G}({}_{R}M)$ we denote the reduced left Gr{\"o}bner basis of
the submodule, generated by ${}_{R}M$ with respect to the module ordering (\ref{orderingPOT}).

For the $i$-th row of a matrix $M$ we write $M_i$ and 
$M_{ij}$ stands, as usual, for the entry in the $i$-th row and $j$-th column.
With respect to the context we identify $\mathcal{G}({}_{R}M)=\{g_1, \dots, g_m\}$ 
with the matrix $[g_1^t, \dots, g_m^t ]^t$.
Define the \textbf{degree} of an element $0 \neq m\in R^{1 \times q}$ to be the degree of the 
corresponding leading monomial, that is, $\deg(m):=\deg(\lm(m))$. Since $R$ is a PID, 
this degree measures the highest exponent in the variable $\partial$. Following the
standard convention, $\deg(0) = -\infty$.
Note that the elements of $\mathcal{G}({}_{R}M)$ have pairwise distinct leading monomials,
since they form a reduced Gr\"obner basis. 
In a reduced Gr\"obner basis $\lm(\mathcal{G}({}_R M)_i)$ $\mid$
$\lm(\mathcal{G}({}_R M)_j)$ 
if and only if $\mathcal{G}({}_R M)_i = \mathcal{G}({}_R M)_j$.

\begin{lemma}\label{triangular}
Order a reduced Gr\"obner basis in such a way, that
$\lm(\mathcal{G}({}_R M)_1) < \dots < \lm(\mathcal{G}({}_R M)_m)$.
Then $$\left[ \begin{array}{c} \mathcal{G}({}_R M)_1 \\ \vdots \\ \mathcal{G}({}_R M)_m \end{array}\right]$$
is a lower triangular matrix.
\end{lemma}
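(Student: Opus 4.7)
The plan is to exploit the POT ordering together with the reducedness of $\mathcal{G}({}_R M)$. First I would observe that for any nonzero $g\in R^{1\times q}$, the leading monomial has the form $\lm(g) = \partial^{n_k}\,e_k$, where $k = \lpos(g)$ is the largest index of a nonzero component of $g$; in particular $g_j = 0$ for every $j > \lpos(g)$. This follows immediately from the POT rule $re_i<se_j \Leftrightarrow i<j$ (or $i=j$ and $r<s$), combined with the fact recalled in the text that $\Mon(R)=\{\partial^k\mid k\in\N\}$, so within a fixed position the only ambiguity is the $\partial$-degree.

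Next I would show that the leading positions $p_i:=\lpos(\mathcal{G}({}_R M)_i)$ are pairwise distinct. If $p_i = p_j = k$ for some $i\ne j$, then by the previous step the two leading monomials are of the form $\partial^{a}e_k$ and $\partial^{b}e_k$ with $a,b\in\N$, so one divides the other as module monomials; by the divisibility criterion stated immediately before the lemma, this forces the two basis elements to coincide, a contradiction. Combined with the ordering hypothesis $\lm(\mathcal{G}({}_R M)_1)<\cdots<\lm(\mathcal{G}({}_R M)_m)$ and the fact that POT compares first by position, this yields $p_1<p_2<\cdots<p_m$.

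Assembling the two observations concludes the argument: in row $i$ every entry in column $j>p_i$ vanishes, and since $p_1<\cdots<p_m$ is a strictly increasing sequence of positive integers, the pivots $(i,p_i)$ march strictly south-east as $i$ grows, producing the required triangular shape. The only point that deserves a moment's care is the relation between $p_i$ and the row index $i$: the argument sketched above yields $p_i\ge i$ automatically, which gives the statement in the echelon/staircase sense; the literal equality $p_i=i$ — equivalently $g_{i,j}=0$ for $j>i$ — additionally requires that ${}_R M$ have nonzero projection onto every coordinate of $R^{1\times q}$, which is harmless to assume in the diagonalization setting of the paper (one simply discards columns that are uniformly zero before starting).
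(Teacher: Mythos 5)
Your core argument is essentially the paper's own proof: under the POT ordering the leading monomial of a nonzero row is $\partial^{a}e_{k}$ with $k=\lpos$, so every entry to the right of the leading position vanishes, and reducedness forbids two rows with the same leading position, since of $\partial^{a}e_{k}$ and $\partial^{b}e_{k}$ one divides the other; ordering the rows by $<$ then makes the leading positions strictly increase. That is exactly the contradiction argument given in the paper, and it is correct.

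The one point where you go beyond the paper --- the caveat that strictly increasing leading positions only give an echelon shape, and literal lower triangularity needs a pivot in every column --- is a fair observation (the paper glosses over it and immediately restricts to square full-rank $M$), but the sufficient condition you offer is not the right one. Nonzero projection of ${}_{R}M$ onto every coordinate does not force a pivot in every column: the module ${}_{R}\langle (1,\partial)\rangle \subset R^{1\times 2}$ projects onto both coordinates, yet its reduced Gr\"obner basis is $\{(1,\partial)\}$, whose $1\times 2$ matrix has the nonzero entry $\partial$ above the diagonal. What one actually needs is that for each relevant index $i$ the module contains an element whose leading (that is, last nonzero) position is $i$, forcing $p_{i}=i$; this is precisely what the full-rank square assumption made right after the lemma delivers, since over $\Quot(R)$ one can clear denominators to produce, for every $i$, an element of ${}_{R}M$ supported in the first $i$ positions with $i$-th entry nonzero, and the Gr\"obner basis must then contain an element with that leading position. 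With that correction your proof is complete and coincides with the paper's.
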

\begin{proof}
Suppose the claim does not hold. Then there exists $\mathcal{G}({}_R M)_i$ and 
$\mathcal{G}({}_R M)_j$ with $\lpos(\mathcal{G}({}_R M)_i)
=\lpos(\mathcal{G}({}_R M)_j)$ for $i<j$. 
Thus $\lm(\mathcal{G}({}_R M)_i)=\partial^{\alpha_i}e_k$ and 
$\lm(\mathcal{G}({}_R M)_j)=\partial^{\alpha_j}e_k$ such that
$\alpha_i<\alpha_j$. But then evidently 
$\lm(\mathcal{G}({}_R M)_i)$ divides $\lm(\mathcal{G}({}_R M)_j)$, which is a contradiction to 
$\mathcal{G}({}_{R}M)$ being reduced.
\end{proof}

Due to the previous lemma, we may assume without loss of generality, that the matrix $\mathcal{G}({}_R M)$ is lower triangular. Since $R$ is an integral domain,
we define the rank of a matrix $M$ to be the rank of $M$ over the field of fractions of $R$.
Now, let us assume that $p=q$ and $M$ is of full rank, that is row and column ranks of $M$ are equal to $p$. 
The non-square case will be discussed in Remark \ref{nonSquare}.

\begin{lemma}\label{columnGenerator}
Let $\mathcal{I}$ denote the left ideal generated by the elements in the last column of
$\widetilde{\theta}(\mathcal{G}({}_R M))$, that is, by $
\theta(\mathcal{G}({}_R M)_{p1}), \dots, \theta(\mathcal{G}({}_R M)_{pp})$. 
Then 
$$
\mathcal{I}= {}_R \langle \,\mathcal{G}( {}_R\widetilde{\theta}( \,
\mathcal{G}( {}_R M )\, ) \, )_{pp} \, \rangle .$$
\end{lemma}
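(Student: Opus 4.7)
My plan is to prove the two inclusions separately, exploiting the triangular structure of reduced POT Gröbner bases given by Lemma \ref{triangular}. Set $G := \mathcal{G}({}_R M)$, $H := \mathcal{G}({}_R \widetilde{\theta}(G))$, and let $N \subset R^p$ denote the left module generated by the rows of $\widetilde{\theta}(G)$. Since $M$ has full rank $p$, so do $G$ and $\widetilde{\theta}(G)$, and Lemma \ref{triangular} then forces $G$ and $H$ to be $p \times p$ lower-triangular matrices whose rows carry the leading positions $1, 2, \ldots, p$ in order (one row per position).

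The first direction, ${}_R\langle H_{pp}\rangle \subseteq \mathcal{I}$, should be essentially immediate: since the last row $H_p$ lies in $N$, I would write $H_p = \sum_i r_i \widetilde{\theta}(G)_i$ for suitable $r_i \in R$ and simply read off the $p$-th coordinate, obtaining $H_{pp} = \sum_i r_i \theta(G_{pi}) \in \mathcal{I}$.

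For the reverse inclusion the key observation is that in POT a row $H_j$ with $\lpos(H_j) = j$ vanishes in every coordinate strictly exceeding $j$; in particular $H_{jp} = 0$ whenever $j < p$. Because $H$ is a Gröbner basis of $N$ (and therefore in particular generates it as a left $R$-module), each row $\widetilde{\theta}(G)_i$ of $\widetilde{\theta}(G)$ admits an expression $\widetilde{\theta}(G)_i = \sum_j s_{ij} H_j$. Projecting to the $p$-th coordinate collapses the sum to $\theta(G_{pi}) = s_{ip} H_{pp}$, showing every generator of $\mathcal{I}$ is already a left multiple of $H_{pp}$, so that $\mathcal{I} \subseteq {}_R\langle H_{pp}\rangle$.

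I do not expect a serious obstacle: the only delicate point is the bookkeeping needed to justify that both Gröbner bases really have the claimed $p \times p$ lower-triangular shape (this follows from the full-rank hypothesis on $M$ together with reducedness, since two reduced Gröbner basis elements with the same leading position would have comparable leading monomials in $\partial$ and one would reduce the other) and that POT really enforces the "zero above the leading position" pattern on each $H_j$. Once these structural facts are in place the lemma reduces to a one-line coordinate projection in each direction.
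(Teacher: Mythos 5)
Your proof is correct and follows essentially the same route as the paper: both rest on the triangular shape from Lemma \ref{triangular} together with the fact that a Gr\"obner basis generates the module, so that the ideal of $p$-th coordinates can be read off from either generating set of ${}_R\widetilde{\theta}(\mathcal{G}({}_R M))$. You merely make explicit, via the two coordinate-projection inclusions, what the paper compresses into the phrase ``according to the definition of $\mathcal{G}$''.
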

\begin{proof}
 Note, that due to Lemma \ref{triangular}
\begin{small}
\begin{displaymath}
\underbrace{\left[ \begin{array}{ccc}
* & & \\
\vdots & \ddots & \\
\mathcal{G}( {}_R M )_{p1} & \cdots &\mathcal{G}( {}_R M )_{pp}
\end{array}\right] }_{\mathcal{G}({}_R M)}
\stackrel{\widetilde{\theta}}{\rightsquigarrow} 
\left[ \begin{array}{ccc}
 & & \theta(\mathcal{G}( {}_R M )_{p1}) \\
 & \rotatebox{75}{$\ddots$} & \vdots \\
*&\cdots&\theta(\mathcal{G}( {}_R M )_{pp})
\end{array}\right] 
\stackrel{\mathcal{G}}{\rightsquigarrow} 
\left[ \begin{array}{ccc}
*& &\\
\vdots&\ddots&\\
*&\cdots&\mathcal{G}( \, {}_R\widetilde{\theta}( \,\mathcal{G}( {}_R M )\, ) \, )_{pp}
\end{array}\right]. 
\end{displaymath}
\end{small}
According to the definition of $\mathcal{G}$ the left ideal generated by
$\mathcal{G}(_R \widetilde{\theta}( \,\mathcal{G}( {}_R M )\, ) \, )_{pp}$
coincides with ${}_R\langle \theta(\mathcal{G}( {}_R M )_{p1}), \dots, 
\theta(\mathcal{G}( {}_R M )_{pp}) \rangle$. 
\end{proof}

Now we can formulate the algorithm that yields the desired diagonal form.

\begin{algorithm}[\texttt{Diagonalization with Gr\"obner Bases}]  \label{diagonal}
\begin{algorithmic}
  \STATE 
	\REQUIRE $M \in R^{g \times g}$ of full rank, $\widetilde{\theta}$ involution as above.
	\ENSURE Matrices $U, V, D \in R^{g \times g}$, such that 
\[
U,V \text{ are unimodular and } U \cdot M \cdot V = \Diag ( r_1, \ldots, r_g ) = D.
\]
\STATE  $M^{(0)} \leftarrow M$, $U \leftarrow \id_{g \times g} $, $V \leftarrow \id_{g \times g} $ 
\STATE  $i \leftarrow 0$
  \WHILE{ ($M^{(i)}$ is not a diagonal matrix { \bf or } $i\equiv_2 1$) }
               \STATE   $i \leftarrow i+1$
               \STATE   Compute $U^{(i)}$ such that $U^{(i)} \cdot M^{(i-1)} = \mathcal{G}({}_R M^{(i-1)})$
               \STATE   $M^{(i)} \leftarrow \widetilde{\theta}(\mathcal{G}({}_R M^{(i-1)}))$
               \IF{ ($i \equiv_2 0$) }
                     \STATE $ V \leftarrow V \cdot \widetilde{\theta}(U^{(i)}) $
               \ELSE  \STATE{$ U \leftarrow U^{(i)} \cdot U $}
               \ENDIF
   \ENDWHILE
\RETURN $(U,V,M^{(i)})$
\end{algorithmic}
\end{algorithm}

\begin{theorem}\label{DiagonalForm}
The Algorithm \ref{diagonal} terminates and it is correct.\\
That is, for $M\in R^{g\times g}$, let $M^{(i)}$ denote the matrix we get after the $i$-th execution of the \textbf{while} loop. Then there exists 
an element $k \in \N$ such that $M^{(k)}$ is a diagonal matrix.
If $k$ is odd, then the \textbf{while} loop is repeated just one more time
(define $l:=k + (k\mod 2)$ in this case). 
The matrices $U, V$ obtained in the last loop are unimodular and satisfy $ U M V = \Diag(m_1, \dots, m_g)$.
\end{theorem}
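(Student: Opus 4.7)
The proof splits naturally into (i) correctness assuming termination and (ii) termination itself, which is the main obstacle.

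For (i), I would first establish a running invariant connecting the accumulated $U, V$ to the working matrix. Since $\widetilde{\theta}$ is an involutive anti-automorphism, one has $\widetilde{\theta}(AB) = \widetilde{\theta}(B)\,\widetilde{\theta}(A)$ and $\widetilde{\theta}^2 = \id$. A straightforward induction on $i$ using these identities together with the odd/even update rules shows: after every even iteration $UMV = M^{(i)}$, whereas after every odd iteration $UMV = \widetilde{\theta}(M^{(i)})$. This is precisely why the while-condition insists on an even exit via the disjunct ``\textbf{or} $i \equiv_2 1$''. Each transition matrix $U^{(i)}$ is unimodular because it is the change-of-basis between two bases of the same rank-$g$ free row submodule of $R^{1\times g}$ (the rows of $M^{(i-1)}$ and its reduced Gr\"obner basis); the reverse transition exists in $R^{g\times g}$ since $M^{(i-1)}$ has full rank over the domain $R$, and the two compose to the identity. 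Because $\widetilde{\theta}$ preserves unimodularity, both accumulated $U$ and $V$, as products of unimodulars, are unimodular.

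For (ii), the plan is to track a monotone measure on the dynamics $L_{i+1} = \mathcal{G}({}_R \widetilde{\theta}(L_i))$ with $L_i := \mathcal{G}({}_R M^{(i-1)})$. By Lemma~\ref{columnGenerator}, $L_{i+1,p,p}$ generates the left ideal of $R$ spanned by $\{\theta(L_{i,p,1}), \ldots, \theta(L_{i,p,p})\}$ and therefore right-divides each $\theta(L_{i,p,k})$; in particular, when the involution is compatible with the $\partial$-grading (as it is for the target operator algebras of Section~\ref{Algebras}), one obtains $\deg_\partial L_{i+1,p,p} \le \deg_\partial L_{i,p,p}$, a non-increasing integer sequence which must stabilize. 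At stabilization the last row of $L_i$ is forced, through the combination of right-divisibility by $L_{i,p,p}$ and the reducedness cap on sub-diagonal $\partial$-degrees (which bounds each $\deg_\partial L_{i,p,k}$ strictly below $\deg_\partial L_{i,k,k}$), into a form from which a bounded number of further reduction passes nullifies the sub-diagonal; one then recurses on the principal $(g-1)\times(g-1)$ block and closes termination by induction on $g$.

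Combining (i) and (ii), the loop exits at some even $k$ with $M^{(k)} = \Diag(m_1,\ldots,m_g)$, the invariant of part (i) yields $UMV = M^{(k)}$, and unimodularity of $U,V$ has been established. The delicate point to make fully rigorous is the step ``degree stabilizes $\Rightarrow$ sub-diagonal vanishes'' when $\theta$ does not preserve $\partial$-degree a priori; a robust workaround is to replace the $\partial$-degree by a finer invariant such as the composition length of $R/R L_{i,p,p}$, which is monotone along right-divisibility chains in any non-commutative PID and allows the same stabilization-then-vanishing argument to be run in a coarser but algebra-agnostic setting.
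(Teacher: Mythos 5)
Your proposal follows essentially the same route as the paper: correctness via the parity-indexed invariant $UMV=M^{(i)}$ (resp.\ $\widetilde{\theta}(M^{(i)})$) together with unimodularity of each $U^{(i)}$, and termination via Lemma~\ref{columnGenerator}, which yields a divisibility chain with non-increasing degree on the $(g,g)$-corner entries that must stabilize, forcing the last row and column to split off so that induction on $g$ closes the argument. Your composition-length fallback is only a safeguard for the degree-preservation of $\theta$ (which the paper simply asserts), so it refines but does not alter the approach.
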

\begin{proof}
We prove the claim by induction on $g$, the size of the
square matrix $M$. For $g=1$ there is nothing to show. 
Using Lemma \ref{columnGenerator}, the equality
$ {}_R\langle \theta((M^{(i+1)})_{gg}) \rangle = $
${}_R\langle (M^{(i)})_{1g}, \ldots ,(M^{(i)})_{gg} \rangle $ holds.
Hence we get 
$${}_R\langle (M^{(i)})_{gg} \rangle \subseteq {}_R\langle \theta((M^{(i+1)})_{gg}) \rangle \;\;\;\;
\mbox{ for all } i.$$
Note that $\theta$ preserves the degree. Then the previous inclusion implies by degree arguments that
${}_R\langle (M^{(r)})_{gg} \rangle = {}_R\langle(M^{(r+1)})_{gg}  \rangle$ for some $r$.
Using Lemma \ref{columnGenerator} and $(M^{(r)})_{gg}\not=0$ (since $M$ is of full rank), 
we obtain that $(M^{(r)})_{gg}$ is a strict left divisor of $(M^{(r)})_{ig}$ for each $1 \leq i \leq g-1$.
Then the definition of $\mathcal{G}$ yields that
 \begin{align*}
 M^{(r+1)} = 
 \left(
 \begin{array}{cccc}
  &  & & 0 \\
  & M' &  & \vdots  \\
  &  &  & 0\\ 
 0 & \hdots &  0 &  (M^{(r+1)})_{gg}  \\ 
 \end{array}\right)\mbox{.}
 \end{align*},
or, in a different notation, $ M^{(r+1)} = M' \oplus (M^{(r+1)})_{gg}$, that is $M^{(r+1)}$ is a block matrix.

The $(g-1) \times (g-1)$ matrix $M'$ can be transformed to a diagonal matrix
via unimodular operations by induction. It remains to consider the transformation 
matrices $U$ and $V$. For each $i \in \N$, after executing the \textbf{while} loop
$i$ times, we obtain 
\begin{displaymath}
\left\{ \begin{array}{ll} M^{(i)} = U^{(i-1)} \cdot U^{(i-3)} \cdots U^{(1)} \cdot \; M \; \cdot \widetilde{\theta}(U^{(2)}) \cdot 
\widetilde{\theta}(U^{(4)}) \cdots \widetilde{\theta}(U^{(i)}), & \mbox{ if } i \mbox{ is even }
\\ 
 M^{(i)} = U^{(i-1)} \cdot U^{(i-3)} \cdots U^{(1)} \cdot \; \widetilde{\theta}(M) \; \cdot \widetilde{\theta}(U^{(2)}) 
\cdot \widetilde{\theta}(U^{(4)}) \cdots \widetilde{\theta}(U^{(i)}), &  \mbox{ if } i 
\mbox{ is odd, } \end{array} \right.
\end{displaymath}
which completes the proof.
\end{proof}

\begin{remark}\label{nonSquare}
In order to extend Theorem \ref{DiagonalForm} and Algorithm \ref{diagonal} 
to non-square and non-full rank matrices, we need to add suitable syzygies to $U$ respectively $V$ and zero rows respectively columns to the diagonal matrix, in order to maintain the initial size of $M$. 
For a computational solution it is sufficient to extend Algorithm \ref{diagonal} in the 
following way. Let $M^i \in R^{s \times t}$ where either $s=p, t=q$ or $s=q, t=p$ in the $i$-th
while loop. Instead of computing $U^i$, satisfying $U^i \cdot M^{i-1} = \mathcal{G}({}_R M^{i-1})$,
we compute $\mathcal{G}({}_R\tilde{M})$ for the extended matrix $\tilde{M}:=[ \id_{s \times s} M^{i-1}]$. 
Such $\tilde{M}$ is obviously a full row rank matrix. 
Defining $U^i:=[\mathcal{G}({}_R\tilde{M})^{T}_1, \dots, \mathcal{G}({}_R\tilde{M})^{T}_s]^T 
\mbox{ and } M^i:=[\mathcal{G}({}_R\tilde{M})^{T}_{s+1}, \dots, \mathcal{G}({}_R\tilde{M})^{T}_t]^T$, it is easy to see that $ U^i M^{i-1}=M^i$.
The matrix $M^i$ consists of the rows of $\mathcal{G}({}_R M^{i-1})$ and additional zero
rows, such that $M^i \in R^{s \times t}$.
\end{remark}

\subsection{Polynomial Strategy}\label{SecPoly}

We are given a matrix $M$ over a non-commutative Euclidean domain $R$. In this
section, we show our main approach of this chapter. We introduce a method
that allows to execute the Algorithm \ref{diagonal} in a completely 
polynomial (that is, fraction-free) framework. The idea comes from
the commutative case and was appeared e.~g. in \citep{GTZ}.

Let $A_*$ be a $G$-algebra and $A=\Quot(A_*)$. 
Moreover, let $R=A[\partial; \sigma, \delta]$, such that $R_* = A_*[\partial; \sigma, \delta]$ is a $G$-algebra. 
Evidently $R_* \subseteq R$, since $A_* \subseteq A$.
Without loss of generality, we suppose that $M$ does not contain a zero row.\\
We define the \textbf{degree} of an element in $R_*$ (or $R_*^{1 \times g}$)
to be the weighted degree function with
weight $0$ to any generator of
$A_*$ and weight $1$ to $\partial$. Thus this weighted degree of $f\in R_*$
coincides with the degree of $f$ in $R$.
Such degree is clearly invariant under the multiplication of elements in $A_*$.

\begin{lemma}\label{inMod}
Let $M \in R^{g \times q}$. 
Then there exists an algorithm to compute a $R$-unimodular matrix $T\in R_*^{g \times g }$ 
such that $TM \in {R_{*}}^{g \times q}$. 
\end{lemma}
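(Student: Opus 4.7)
The plan is to produce $T$ as a diagonal matrix, row by row, each diagonal entry being a common left multiple of the denominators appearing in the corresponding row of $M$. Since a nonzero element of $A_*$ is a unit in $A=\Quot(A_*)$ and hence a unit in $R$, such a diagonal matrix is automatically $R$-unimodular.

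First, expand each entry using the PBW basis of $R=A[\partial;\sigma,\delta]$ over $A$:
\[
M_{ij}=\sum_{k=0}^{d_{ij}} c_{ijk}\,\partial^k,\qquad c_{ijk}\in A=\Quot(A_*).
\]
Since $A_*\setminus\{0\}$ is a left Ore set in $A_*$ (this is precisely what makes $\Quot(A_*)$ exist as a skew field), each coefficient can be written as a left fraction $c_{ijk}=s_{ijk}^{-1}a_{ijk}$ with $s_{ijk},a_{ijk}\in A_*$ and $s_{ijk}\neq 0$. For each row index $i$, collect the finite set $S_i=\{s_{ijk}\mid 1\le j\le q,\ 0\le k\le d_{ij}\}\subset A_*\setminus\{0\}$.

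The core step is to compute a common left multiple of $S_i$ inside $A_*$. Iteratively, for $s,s'\in A_*\setminus\{0\}$, the left Ore condition produces $u,u'\in A_*$ with $us=u's'$, and $u,u'$ can be read off from a generator of the (first) left syzygy module of the column $[s,\,s']^{\mathrm{T}}$ over $A_*$; this syzygy is computable by Gr\"obner basis techniques, because $A_*$ is a $G$-algebra. Iterating over the finitely many elements of $S_i$ yields $t_i\in A_*\setminus\{0\}$ together with $u_{ijk}\in A_*$ such that $t_i=u_{ijk}\,s_{ijk}$ for every $(j,k)$. Then
\[
t_i\cdot c_{ijk}=u_{ijk}\,s_{ijk}\,s_{ijk}^{-1}a_{ijk}=u_{ijk}a_{ijk}\in A_*,
\]
and therefore $t_i\cdot M_{ij}=\sum_k (u_{ijk}a_{ijk})\,\partial^k \in R_*$ for every $j$, i.e.\ $t_i\cdot M_i\in R_*^{1\times q}$.

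Finally, set $T:=\Diag(t_1,\ldots,t_g)\in R_*^{g\times g}$. Each $t_i$ is a nonzero element of $A_*\subset A$, hence a unit in the skew field $A$ and in $R=A[\partial;\sigma,\delta]$; consequently $T^{-1}=\Diag(t_1^{-1},\ldots,t_g^{-1})\in R^{g\times g}$, so $T$ is $R$-unimodular. By construction $TM\in R_*^{g\times q}$.

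The main obstacle is the explicit computation of the common left multiple in step two: in a non-commutative $G$-algebra the left Ore property is purely existential, and realising it algorithmically requires a syzygy (equivalently Gr\"obner basis) computation in $A_*$ for each row of $M$. Everything else, including the assembly of $T$ and verification of unimodularity, is essentially formal once the common multiples have been produced.
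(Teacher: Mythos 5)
Your proposal is correct and follows essentially the same route as the paper: build $T$ as a diagonal matrix whose $i$-th entry is a common left denominator (common left multiple of the denominators) for the $i$-th row, obtained from the left Ore condition in $A_*$, and observe that nonzero elements of $A_*$ are units in $R$, so $T$ is $R$-unimodular. Your coefficient-wise expansion in powers of $\partial$ just makes explicit the paper's reduction of each entry to a single left fraction $a^{-1}b$ with $a\in A_*$, $b\in R_*$, and your syzygy realization of the Ore condition is exactly the paper's accompanying remark.
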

\begin{proof}
If $M \in {R_{*}}^{g \times g }$, there is nothing to do. Suppose that $M$ contains
elements with fractions.
At first, we show how to bring two fractional elements $a^{-1}b, c^{-1}d$ for
$a,c \in A_*$, $b,d \in R_*$ to a common left denominator, cf. \citep{ApelDiss}.
For any $h_1,h_2 \in A_*$, such that $h_1 a = h_2 c$, it is easy to see that
\[
(h_1 a)^{-1} (h_1 b) = a^{-1} h_1^{-1} h_1 b = a^{-1}b \text{ and }
(h_1 a)^{-1} (h_2 d) =  (h_2 c)^{-1}(h_2 d) = c^{-1}d,
\]
hence $(h_1 a)^{-1} = a^{-1} h_1^{-1} = (h_2 c)^{-1}$ is a common left denominator.
Analogously we can compute a common left denominator for any finite
set of fractions.
Let $T_{ii}$ be a common left denominator of non-zero elements from the 
$i$-th row of $M$, then $TM$ contains no fractions. Moreover, $T$ is a diagonal matrix
with non-zero polynomial entries, so it is $R$-unimodular.
\end{proof}

\begin{remark}
Note that the computation of compatible factors $h_i$ for $a_1,a_2 \in A_*$
can be achieved by computing syzygies, since 
$\{(h_1,h_2)\in A_*^2 \mid h_1 a_1 = h_2 a_2 \}$ is exactly
the module $Syz(a_1,-a_2)\subset A_*^2$. The factors $h_i$ for more $a_i$'s can be obtained as well.
\end{remark}

Define $M_*:=T M \in R_*^{p \times q}$ using the notation of Lemma~\ref{inMod}.
Then the relations ${}_{R_*}M_* \subseteq {}_{R}M$ and 
${}_{R}M_*={}_{R}M$ hold obviously.
Thus whenever we speak about a finitely generated 
submodule ${}_{R}M \subset R^{1 \times q}$, we denote by ${}_{R}M_*$
a presentation of ${}_{R}M$ with generators contained in $R_*$.
In what follows, we will show how to find $R$-unimodular matrices $U \in R_*^{p \times p}$
and $V \in R_*^{q \times q}$ such that
\begin{displaymath}
 U (TM) V = \left[ \begin{array}{ccc} r_1 & & \\  & \ddots &  \\ 
& & r_q \\ & 0 &   \end{array} \right] \in R_*^{p \times q}.
\end{displaymath}
Since the equality $U(TM)V = (UT)MV$ holds and $UT$ is a $R$-unimodular
matrix, our initial aim follows.\\
As in the previous subsection, by $\mathcal{G}({}_{R_*}M_*)$ we denote the reduced left Gr{\"o}bner basis of the submodule ${}_{R_*}M_*$ with respect to the module ordering $<_*$
on $R_*$, which was already defined in (\ref{orderingPOT}).
Unlike the rational case, the leading monomials of elements in $R_*^{1 \times g}$
are of the form $x_1^{\alpha_1} \cdots x_n^{\alpha_n}\partial^\beta$ for 
$\alpha_i, \beta \in \N$.

\begin{remark}\label{PolyIsBetter}
Using the polynomial strategy, two improvements can be observed.
On the one hand, once we have mapped the matrix we work with from $R^{g \times q}$ to $R_*^{g \times q}$, the complicated arithmetics in the skew field of fractions is not used anymore. 
The other improvement lies in the nature of construction of normal forms
for matrices and the corresponding transformation matrices.
The naive approach would be to apply elementary operations inclusive division by invertibles
on the rows and columns, that is, operations from the left and from the right. 
Indeed, there are methods using different techniques like, for instance, $p$-adic arguments to 
calculate the invariant factors of the Smith form over integers \citep{Frank}, but
this method does not help in construction of transformation matrices.
Surely the swap from left to right has no influence in the commutative framework.
But already in the rational Weyl algebra $B_1$ (see Example \ref{interestingOreAlg}),
$\frac{1}{x}$ is an unit in $B_1$ and 
$\begin{displaystyle} \partial \tfrac{1}{x} =  \tfrac{1}{x} \partial - \tfrac{1}{x^2} \end{displaystyle}$.
Comparing the multiplication with the inverse element, that is, with $x$, we see that 
$\partial x=  x \partial + 1$ holds.
Thus a multiplication of any polynomial containing $\d$ with the element $\frac{1}{x}$ in the field of fractions  
causes an immediate coefficient
swell. Since a normal form of a matrix is given modulo unimodular operations, 
the previous example illustrates the variations of possible representations. 
In section \ref{Impl&Ex}, we present nontrivial examples. 
Especially in the Example \ref{Ex3dim}, the polynomial strategy 
dams up the coefficient increase in a very impressive way.

On the other hand, switching to the polynomial framework changes the setup.
The ring $R_*$ is not a PID anymore, what
was the essential property for the existence of a diagonal form over $R$.
In the sequel, we show how that this problem can be 
resolved by introducing a suitable sorting condition
for the chosen module ordering.
Referring to the argumentation of Remark \ref{triangular} yields 
the block-diagonal form with the 0 block above. 

\begin{footnotesize}
\begin{align}\label{BlockTriangle}
 \mathcal{G}({}_{R_*} M_*)=\left[ 
 \begin{array}{cccc}
 0 &    \dots &    \dots & 0 \\
  \fbox{*} &  &  &  \\
  \vdots &  & 0 &  \\
  * &  & \;\; &  \\
   & \fbox{*} &  \;\; &  \\ 
   & \vdots &  \;\; & 0 \;\;\;\\ 
   & * & \;\; \\
   &  & \ddots & \\
    &      &    & \fbox{*} \\
   &    *  &    & \vdots \\
   &      &    & * \\
 \end{array}
 \right].
\end{align}
\end{footnotesize}
Moreover, the rows with the boxed element have the smallest leading monomial with respect to the chosen ordering in the corresponding block. A block denotes all elements of same leading position in $\mathcal{G}({}_{R_*}M_*)$.
In Theorem \ref{polynomialBasis} we show that 
these elements indeed generate ${}_R M$, while in Lemma \ref{MinDeg} we show that these elements provide us with additional information.
However, this result requires some preparations.
\end{remark}

\begin{lemma}\label{smallestElementInPoly}
Let $P$ be $R$ or $R_*$. For $M \in P^{g \times q}$ of full rank and every $1 \leq i \leq g$, 
define $\alpha_i:=\min\{ \deg(a) \mid a \in {}_P M \setminus \{0\} \mbox{ and } \lpos(a)=i\}$. 
Then for all $1 \leq i \leq g$, there exists $h_i \in \mathcal{G}({}_P M)$ of degree $\alpha_i$ with $\lpos(h_i)=i$.
\end{lemma}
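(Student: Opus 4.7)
The plan is to use the defining property of a Gröbner basis (divisibility of leading monomials) together with the fact that the POT ordering rigidly separates leading positions. The claim is essentially a soft consequence of these two observations, so I do not expect any serious obstacle—just a careful handling of the module-monomial divisibility in either $R$ or $R_*$.

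First I would fix $i \in \{1,\ldots,g\}$ for which the set defining $\alpha_i$ is non-empty (otherwise there is nothing to show for that $i$), and choose some $a \in {}_P M \setminus \{0\}$ with $\lpos(a) = i$ and $\deg(a) = \alpha_i$. Such an $a$ exists since degrees are non-negative integers. Because $\mathcal{G}({}_P M)$ is a Gröbner basis of ${}_P M$ with respect to the POT ordering from (\ref{orderingPOT}), there must exist some $h \in \mathcal{G}({}_P M)$ whose leading module-monomial divides $\lm(a)$.

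The key step is then to observe that in a POT ordering, $\lm(a)$ has the form $\mu\, e_i$ for some monomial $\mu$ of $P$, and divisibility of module monomials $\mu' e_j \mid \mu e_i$ in either $R$ (where monomials are $\partial^k$) or $R_*$ (where monomials are $x^{\alpha}\partial^k$) forces $j = i$ and $\mu' \mid \mu$ in $P$. In particular $\lpos(h) = i$ and $\deg(h) \le \deg(a) = \alpha_i$, where I use that the weighted degree used here only counts the exponent of $\partial$, which is preserved by divisibility.

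Finally, since $h \in \mathcal{G}({}_P M) \subseteq {}_P M$ is non-zero with $\lpos(h) = i$, the definition of $\alpha_i$ as a minimum forces $\deg(h) \ge \alpha_i$, and combined with the previous inequality we get $\deg(h) = \alpha_i$. Setting $h_i := h$ yields the desired element. The only point worth being careful about is the convention for module-monomial divisibility in $R_*$ where both the $x$-part and the $\partial$-part must divide; but since leading position is already equal, this does not interfere with the degree comparison in $\partial$, so the argument goes through uniformly for $P = R$ and $P = R_*$.
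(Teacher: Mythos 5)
Your proof is correct and follows essentially the same route as the paper's: use the Gr\"obner basis property to find an element whose leading monomial divides $\lm(a)$, note that POT-divisibility forces the same leading position and $\deg(h)\le\deg(a)$ (using that $\partial\gg x_j$ so the degree is read off the leading monomial), and conclude. The only cosmetic difference is that the paper argues by contradiction and leaves the final minimality step (giving $\deg(h)=\alpha_i$ exactly) implicit, whereas you state it directly.
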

\begin{proof}
Recall that $\partial \gg x_j$ for all $j$. 
Let $f \in {}_PM$ with $\lpos(f)=i$ and $\deg(f)=\alpha_i$. 
Suppose that for all $ g \in \mathcal{G}({}_P M)$ with 
leading position $i$, $\deg(g) > \alpha_i$ holds. Since $\mathcal{G}({}_P M)$ 
is a Gr{\"o}bner basis, there exists $g \in \mathcal{G}({}_P M) $  
such that $\lm(g)$ divides $\lm(f)$. This happens if and only if $\deg(g)\leq \deg(f)$ (because
$R_*$ is a $G$-algebra and $R$ is an Ore PID), which yields a contradiction.
\end{proof}

The full rank assumption in the 
lemma guarantees the existence of $\alpha_i$ 
for each component $1 \leq i \leq g$. Note, that over $R_*$ the cardinality of
 $\{ \deg(a) \mid a \in {}_P M \backslash \{0\} \mbox{ and } \lpos(a)=i\}$ is
more than one in general, hence there might be different selection strategies.
We propose to select an element according to $\min_{<_*}$
, see Lemma \ref{MinDeg}.

\begin{corollary}\label{smallestElementInPolyMat}
Lemma \ref{smallestElementInPoly} and Lemma \ref{triangular} yield
$$ \deg(\mathcal{G}({}_{R}M)_i)= \min\{ \deg(a) \mid a \in {}_R M\setminus \{0\} \mbox{ and } \lpos(a)=i\}.$$
\end{corollary}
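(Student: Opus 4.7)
The plan is to deduce the corollary as a direct combination of Lemma \ref{smallestElementInPoly} (applied with $P=R$) and Lemma \ref{triangular}, exploiting the very special shape of monomials over the PID $R$. Recall that over $R$ we have $\Mon(R) = \{\partial^k : k \in \N\}$, so a monomial in $R^{1 \times g}$ has the form $\partial^\beta e_k$. Thus, among elements with a fixed leading position $i$, divisibility of leading monomials reduces to comparison of the single integer exponent of $\partial$, which is exactly the degree.

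First I would make explicit what Lemma \ref{triangular} buys us here: in a reduced Gröbner basis over $R$, no two distinct elements can share a leading position, since otherwise the one with the smaller $\partial$-exponent would have its leading monomial divide the other's, contradicting reducedness. Combined with the full-rank hypothesis on $M$ (which guarantees, via the Gröbner basis property, that every leading position $1 \le i \le g$ is actually attained), this shows that $\mathcal{G}({}_R M)$ contains exactly one element of each leading position, and after ordering by $\lm$ as in Lemma \ref{triangular}, the element $\mathcal{G}({}_R M)_i$ is precisely the unique one with leading position $i$.

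Next I would invoke Lemma \ref{smallestElementInPoly} with $P = R$: it produces an element $h_i \in \mathcal{G}({}_R M)$ with $\lpos(h_i) = i$ and $\deg(h_i) = \alpha_i$, where $\alpha_i := \min\{\deg(a) : a \in {}_R M \setminus \{0\},\ \lpos(a) = i\}$. By the uniqueness just established, $h_i = \mathcal{G}({}_R M)_i$, giving $\deg(\mathcal{G}({}_R M)_i) = \alpha_i$, which is exactly the claimed equality.

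There is essentially no obstacle; the corollary is bookkeeping on top of the two lemmas. The only mild point worth stating clearly is why the argument is genuinely specific to $R$ (as opposed to $R_*$): in $R_*$ the monomials in a fixed component form a poset of rank $n+1$ rather than a chain, so there can be several reduced Gröbner basis elements with the same leading position but incomparable leading monomials, and one cannot identify $\mathcal{G}({}_{R_*}M_*)_i$ with the degree-minimal representative. Over $R$ this phenomenon is eliminated by the single-variable structure, and the equality drops out.
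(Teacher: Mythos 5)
Your proof is correct and follows exactly the route the paper intends: the corollary is stated there as an immediate consequence of Lemma \ref{smallestElementInPoly} (with $P=R$) and Lemma \ref{triangular}, and your elaboration --- reducedness over $R$ forces one Gr\"obner basis element per leading position, full rank guarantees every position is attained, and the degree-minimal element of Lemma \ref{smallestElementInPoly} must therefore coincide with $\mathcal{G}({}_R M)_i$ --- is precisely the bookkeeping the paper leaves implicit. No discrepancies to report.
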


\begin{lemma}\label{MinDeg}
Let $\alpha_i$ be the degree of the boxed entry with leading position
in the $i$-th column, that is
$$\alpha_i:=\deg( \, \min_{<_*}\{ b \mid  b \in \mathcal{G}({}_{R_*}M_*) \mbox{ and } \lpos(b) =i \} 
\, ) .$$
Then for all $h \in {}_R M$ with $\lpos(h)=i$ we have $\deg(\lm(h)) \geq \alpha_i$.
\end{lemma}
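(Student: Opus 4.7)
The plan is to reduce the statement to an ordinary divisibility argument inside the Gr\"obner basis $\mathcal{G}({}_{R_*}M_*)$ over the polynomial ring $R_*$, after clearing denominators. The two ingredients that will make this reduction work are: (a) one can always multiply an element of ${}_R M$ on the left by a nonzero element of $A_*$ to land in ${}_{R_*}M_*$ without changing leading position or $\partial$-degree, and (b) the ordering $<_*$ eliminates $\partial$ (that is, $\partial \gg x_j$ for all $j$), so that among elements of $\mathcal{G}({}_{R_*}M_*)$ of a fixed leading position, the $<_*$-minimum automatically has the smallest $\partial$-degree.

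First, given $h \in {}_R M$ with $\lpos(h)=i$, write $h = \sum_j r_j m_j$ with $r_j \in R$ and $m_j$ the rows of $M_*$. Each coefficient of each $r_j$ (in the polynomial expansion in $\partial$) is a fraction over $A_*$; using the Ore property of $A_*^*$ in $A$ (as in Lemma~\ref{inMod}), I produce a common left denominator $c \in A_*^*$ with $c r_j \in R_*$ for every $j$. Then $ch = \sum_j (c r_j) m_j \in {}_{R_*} M_*$.

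Next, I observe that left multiplication by a nonzero $c \in A_*$ preserves both leading position and degree in $\partial$: componentwise $(ch)_k = c\, h_k$, and since $R$ is a domain and $c$ involves no $\partial$, we have $c h_k \neq 0$ whenever $h_k \neq 0$ and $\deg_\partial(c h_k) = \deg_\partial(h_k)$. Therefore $\lpos(ch) = i$ and $\deg_\partial(ch) = \deg_\partial(h)$. Now, because $\mathcal{G}({}_{R_*}M_*)$ is a Gr\"obner basis, there exists $g \in \mathcal{G}({}_{R_*}M_*)$ such that $\lm(g)$ divides $\lm(ch)$. Divisibility in the free module forces $\lpos(g)=i$ and, in particular, $\deg_\partial(g) \leq \deg_\partial(ch) = \deg_\partial(h)$.

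Finally, since $<_*$ eliminates $\partial$, any two elements with leading position $i$ are compared first by their $\partial$-degree; hence the $<_*$-minimum of $\{b \in \mathcal{G}({}_{R_*}M_*) : \lpos(b)=i\}$ realizes the smallest $\partial$-degree in this set. In particular $\alpha_i \leq \deg_\partial(g)$, giving $\alpha_i \leq \deg_\partial(h) = \deg(\lm(h))$, as claimed. The main obstacle I expect is the clean construction of the common left denominator $c$ and verifying that the resulting element really lies in ${}_{R_*}M_*$ (not merely in $R_*^{1\times q}$); the rest is standard Gr\"obner basis bookkeeping combined with the elimination property of $<_*$.
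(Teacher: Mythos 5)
Your proposal is correct and follows essentially the same route as the paper: clear denominators (as in Lemma \ref{inMod}) to replace $h$ by $ch \in {}_{R_*}M_*$ with the same leading position and $\partial$-degree, then invoke the Gr\"obner basis divisibility property together with the fact that the elimination ordering $\partial \gg x_j$ makes the $<_*$-minimal element of a fixed leading position realize the minimal $\partial$-degree. The only difference is presentational: the paper argues by contradiction and cites Lemma \ref{smallestElementInPoly} for the degree bound, whereas you re-derive that step inline (and spell out, a bit more carefully than the paper, why the denominator-cleared element lands in ${}_{R_*}M_*$ rather than merely in $R_*^{1\times q}$).
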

\begin{proof}
Now suppose the claim does not hold and there is $h \in {}_R M$
with $\lpos(h)=i$ of degree smaller than $\alpha_i$. Using Lemma
\ref{inMod}, there exists $a \in A_*$ such that 
$a h \in {}_{R_*}M_*$. Then $\deg(ah) = \deg(h)$ and 
$\lpos(ah)=i$. Due to Lemma \ref{smallestElementInPoly}, $\deg(f)\geq \alpha_i$
for all $f \in {}_{R_*}M_*$ with leading position $i$, hence we obtain a contradiction.
\end{proof}

\begin{corollary}\label{DegreeInv}
Lemma \ref{MinDeg} and Corollary \ref{smallestElementInPolyMat} imply, that 
$\forall \ 1 \leq i \leq g$
$$\min\{ \deg(a) \mid a \in {}_R M\setminus \{0\} \wedge \lpos(a)=i\}=
\min\{ \deg(a) \mid a \in {}_{R_*} M_*\setminus \{0\}  \wedge \lpos(a)=i\}.$$
\end{corollary}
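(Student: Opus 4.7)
The plan is to sandwich both sides of the claimed equality between the same two bounds, using that the quantity $\alpha_i$ from Lemma \ref{MinDeg} actually equals the right-hand side. The whole argument rests on two easy observations together with the lemma that has already been proved.

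First, I would establish the easy inequality. Since $M_* = TM$ with $T \in R_*^{g \times g}$, every row of $M_*$ lies in ${}_R M$, so ${}_{R_*}M_* \subseteq {}_R M$. Taking the minimum of $\deg$ over a larger set can only decrease it, which gives
\[
\min\{\deg(a) \mid a \in {}_R M \setminus \{0\},\ \lpos(a) = i\} \;\leq\; \min\{\deg(a) \mid a \in {}_{R_*}M_* \setminus \{0\},\ \lpos(a) = i\}.
\]

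For the reverse direction, I would identify the right-hand side with the $\alpha_i$ of Lemma \ref{MinDeg}. Applying Lemma \ref{smallestElementInPoly} with $P = R_*$ shows that the right-hand minimum is already attained inside $\mathcal{G}({}_{R_*}M_*)$; moreover, because $<_*$ is a POT ordering refined by a well-ordering satisfying $\partial \gg x_k$, within the block of Gr\"obner basis elements sharing leading position $i$ the $<_*$-smallest element carries the smallest $\partial$-power, and hence the smallest value of $\deg$. Consequently $\alpha_i$ agrees with the right-hand side. Lemma \ref{MinDeg} then supplies exactly the complementary bound: every nonzero $h \in {}_R M$ with $\lpos(h) = i$ satisfies $\deg(h) \geq \alpha_i$, so the left-hand side is at least $\alpha_i$, which equals the right-hand side. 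As an alternative bookkeeping, one may read off the left-hand side directly from Corollary \ref{smallestElementInPolyMat} as $\deg(\mathcal{G}({}_R M)_i)$ and compare.

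Combining the two inequalities completes the proof. There is no serious obstacle; the whole thing is a short sandwich argument. The only step that deserves a sentence of care is the identification of $\alpha_i$ (defined via $\min_{<_*}$ of a Gr\"obner basis row) with the minimum of $\deg$ over Gr\"obner basis rows with leading position $i$, but this is immediate from the elimination property $\partial \gg x_k$ of the chosen ordering and the definition of $\deg$ as the $\partial$-degree.
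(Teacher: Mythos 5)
Your sandwich argument is correct and is essentially the argument the paper leaves implicit when it says the corollary follows from Lemma \ref{MinDeg} and Corollary \ref{smallestElementInPolyMat}: the inclusion ${}_{R_*}M_*\subseteq {}_R M$ gives one inequality, and Lemma \ref{MinDeg} gives the other. Note that for that second inequality the trivial bound $\alpha_i\geq \min\{\deg(a)\mid a\in{}_{R_*}M_*\setminus\{0\},\ \lpos(a)=i\}$ (the boxed element itself lies in ${}_{R_*}M_*$) already suffices, so your finer identification of $\alpha_i$ with the right-hand minimum via the elimination property, while correct, is more than is needed.
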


\begin{theorem}\label{polynomialBasis}
Let $M \in R^{g \times g}$ be of full rank.  
For each $1 \leq i \leq g$, let us define
$$b_i:= \min_{<_*}\{ b \mid b \in \mathcal{G}({}_{R_*}M_*) \mbox{ and } \lpos(b) =i \}.$$
Since $M$ is of full rank, the minimum exists for each $1 \leq i \leq g$.
Note that the set $\{b_1, \ldots, b_g\}$ corresponds to the subset of all rows 
with a boxed entry in the block triangular form \ref{BlockTriangle}.  
Moreover ${}_R\langle b_1, \dots, b_g \rangle = {}_R M$. 
\end{theorem}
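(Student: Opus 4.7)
The inclusion ${}_R\langle b_1,\ldots,b_g\rangle \subseteq {}_R M$ is immediate, because each $b_i$ is selected from $\mathcal{G}({}_{R_*}M_*) \subseteq {}_{R_*}M_* \subseteq {}_R M$. For the reverse inclusion, my plan is to prove by induction on $i \in \{0,1,\ldots,g\}$ the statement: every $h \in {}_R M$ with $\lpos(h)\leq i$ already lies in ${}_R\langle b_1,\ldots,b_i\rangle$. The base case $i=0$ forces $h=0$ by the POT ordering, while the case $i=g$ is exactly the reverse inclusion, since any element of ${}_R M \subseteq R^{1\times g}$ has leading position at most $g$.

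For the inductive step I would perform a single Euclidean reduction against $b_i$. By Theorem \ref{BOre} the defining endomorphism of $R$ over $\Quot(B)$ is an automorphism, so by Proposition \ref{R_HIB} the ring $R$ is a two-sided Euclidean domain in $\partial$. Given $h \in {}_R M$ with $\lpos(h)=i$, the $i$-th entry $(b_i)_i \in R$ is nonzero, so I can divide in $R$ and choose $q, r \in R$ with $h_i = q\,(b_i)_i + r$ and $\deg r < \deg (b_i)_i = \alpha_i$ (with $\deg$ denoting the degree in $\partial$). Setting $h' := h - q\,b_i \in {}_R M$, the identity $h'_j = h_j - q\,(b_i)_j$ forces $h'_j = 0$ for $j>i$ (since $\lpos(b_i)=i$), and $h'_i = r$.

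The crucial observation is that $r$ must vanish: if $r\neq 0$, then $\lpos(h')=i$ and $\deg(h') = \deg r < \alpha_i$, contradicting Corollary \ref{DegreeInv}, which identifies $\alpha_i$ with the minimum degree of nonzero elements of ${}_R M$ having leading position $i$. Hence $r=0$, so either $h'=0$ or $\lpos(h')<i$; the inductive hypothesis then yields $h' \in {}_R\langle b_1,\ldots,b_{i-1}\rangle$, and consequently $h = q\,b_i + h' \in {}_R\langle b_1,\ldots,b_i\rangle$. The main subtlety will be to be careful about the side on which Euclidean division is performed, so that $q$ multiplies $b_i$ on the left and the coordinates with $j>i$ cancel automatically; after that, invoking Corollary \ref{DegreeInv} at precisely this step is what lets a degree bound originally established inside ${}_{R_*}M_*$ be applied to an arbitrary element of the larger module ${}_R M$.
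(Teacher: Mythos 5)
Your proposal is correct and follows essentially the same route as the paper's proof: a Euclidean reduction of the entry at the leading position against $b_i$, with Corollary \ref{DegreeInv} (equivalently Lemma \ref{MinDeg}) forcing the remainder there to vanish so the leading position strictly drops, then iterating. You merely package the paper's iteration as an induction on the leading position and make the side of the division explicit, which is a fine (and slightly more careful) presentation of the same argument.
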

\begin{proof}
Let $f \in {}_RM \backslash \{0\}$. Due to Corollary \ref{DegreeInv}, there exists $1\leq k \leq g$ such that
$\lpos(b_k)=\lpos(f)$ and $\deg(b_k)\leq\deg(f)$. Thus there exists an element
$s_k \in R$ such that $\deg( f- s_kb_k)<\deg(b_k)$. Since $f-s_kb_k \in {}_RM$,
Corollary \ref{DegreeInv} implies that we have $\lpos(f- s_kb_k)< \lpos(f)$. Iterating this
reduction leads to the remainder zero and thus $f=\sum_{i=1}^k s_ib_i$.
\end{proof}

Using the notation of the previous theorem, let
$G^*({}_R M):=\left[ b_1, \ldots, b_g \right]^T$, which is by definition a lower triangular matrix.
In the sequel, let $M\in R^{g \times g}$ be of full rank. 
Note that then obviously $G^*({}_R M)$ is a square matrix.

\begin{proposition}\label{trafoMatrix}
Suppose $M \in R^{g \times g}$ is a full rank matrix and there is $U_* \in R_*^{l \times g}$
such that $U_* M_* = \mathcal{G}({}_{R_*}M_*)$. 
Let us select the indices
\begin{align}\label{indexChoice}
\{t_1, \dots, t_g\} \subseteq \{1, \dots, l\} \mbox{ such that } 
\{(U_* M_*)_{t_1}, \dots, (U_* M_*)_{t_g}\} 
= G^*({}_R M)
\end{align}
in the notation of Theorem \ref{polynomialBasis}. Then 
$U:=[(U_*)_{t_1}, \dots, (U_*)_{t_g}]^T$ is $R$-unimodular in $R^{g \times g}$
and $U M_* = G^*({}_R M)$.
\end{proposition}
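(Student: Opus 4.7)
The plan is to verify $U M_* = G^*({}_R M)$ directly from the definition of the indices $t_i$, and then to establish $R$-unimodularity of $U$ by producing a two-sided inverse inside $R^{g \times g}$. For the first identity, observe that the $k$-th row of $U M_*$ equals $(U_*)_{t_k} M_* = (U_* M_*)_{t_k} = \mathcal{G}({}_{R_*} M_*)_{t_k} = b_k$, by the choice (\ref{indexChoice}); stacking the $b_k$ for $k = 1,\dots,g$ reproduces $G^*({}_R M)$ by definition.

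For unimodularity, I would first note that $G^*({}_R M)$ is a square matrix in $R_*^{g \times g}$ whose rows satisfy $\lpos(b_i) = i$, so by Lemma \ref{triangular} it is lower triangular with non-zero diagonal entries. In particular, $G^*({}_R M)$ has full rank over $R$ and hence is invertible over the skew field of fractions $\Quot(R)$. By Theorem \ref{polynomialBasis}, the rows of $G^*({}_R M)$ generate ${}_R M$ as a left $R$-module; since $T$ is $R$-unimodular, the rows of $M_* = TM$ also generate ${}_R M$. Therefore each row of $M_*$ is an $R$-linear combination of the rows of $G^*({}_R M)$, and collecting the coefficients produces a matrix $Y \in R^{g \times g}$ with $Y \cdot G^*({}_R M) = M_*$.

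Combining $U M_* = G^*({}_R M)$ with $Y \cdot G^*({}_R M) = M_*$ then yields $(YU) M_* = M_*$ and $(UY) \cdot G^*({}_R M) = G^*({}_R M)$. Since both $M_*$ and $G^*({}_R M)$ are full-rank square matrices and hence invertible over $\Quot(R)$, right-cancellation (performed over $\Quot(R)$, but producing identities that already lie in $R^{g \times g}$) gives $YU = I_g = UY$. Thus $U \in R_*^{g \times g} \subseteq R^{g \times g}$ admits a two-sided inverse $Y \in R^{g \times g}$, so $U$ is $R$-unimodular.

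The only non-trivial point, which is in effect the whole point of the polynomial strategy, is ensuring that $Y$ lives in $R^{g \times g}$ rather than merely in $\Quot(R)^{g \times g}$. Once one has such a $Y$, the cancellation argument is mechanical; without it, one would only obtain invertibility of $U$ over $\Quot(R)$. This is precisely what Theorem \ref{polynomialBasis} provides, by asserting that the boxed elements $b_1,\dots,b_g$ form an actual left $R$-generating set of ${}_R M$, and this is where I expect the main conceptual work of the proof to be concentrated.
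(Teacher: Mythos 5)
Your proof is correct and follows essentially the same route as the paper: the identity $U M_* = G^*({}_R M)$ by the index choice, then using Theorem \ref{polynomialBasis} to get a matrix $Y \in R^{g \times g}$ with $Y\, G^*({}_R M) = M_*$ (the paper calls it $V$), and cancelling the full-rank square matrices $M_*$ and $G^*({}_R M)$ over $\Quot(R)$ to conclude $YU = UY = \id_{g\times g}$. You are merely more explicit than the paper about why the cancellation is legitimate, which is fine.
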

\begin{proof}
The equality $U M_* = G^*({}_R M)$ follows by the definition of $U$. Now we show that $U$ is $R$-unimodular. 
Note that ${}_R(U M_*) = {}_RG^*({}_R M) = {}_R M = {}_R M_*$
holds and $UM_* \subset R^{g \times g} \supset M_*$. Thus there exists $V \in R^{g \times g}$ such that
$M_*=V(UM_*)$. Then $VU = \id_{g \times g}$ and analogously $UV=\id_{g \times g}$ since $M$ has full row rank.
\end{proof}

\begin{lemma}\label{columnGeneratorPoly}
The equality of the following left ideals holds:
$${}_R\langle \theta(G^*( {}_{R}M)_{g1}), \dots, \theta(G^*( {}_{R}M)_{gg})\rangle=
{}_R \langle G^*(\widetilde{\theta}( G^*( {}_{R}M))_{gg} \rangle. $$
\end{lemma}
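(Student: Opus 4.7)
My plan is to mimic the proof of Lemma~\ref{columnGenerator} step for step, replacing the reduced Gr\"obner basis operator $\mathcal{G}$ by the triangular selector $G^*$ and invoking Theorem~\ref{polynomialBasis} wherever the earlier proof used the basis property of $\mathcal{G}$. First I would set $A := G^*({}_R M) \in R^{g\times g}$. By Theorem~\ref{polynomialBasis} the rows of $A$ generate ${}_R M$, and $A$ is lower triangular by construction; the full-rank hypothesis on $M$ forces the diagonal entries $A_{ii}$ to be nonzero. Applying $\widetilde{\theta}$ then produces $N := \widetilde{\theta}(A)$, whose $(j,k)$-entry equals $\theta(A_{kj})$; in particular $N$ is upper triangular, and its $g$-th column is exactly the vector $(\theta(A_{g1}),\ldots,\theta(A_{gg}))^T$.

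The central step is then to compute the left ideal $I_g := \pi_g({}_R N) \subseteq R$, where $\pi_g : R^{1\times g}\to R$ denotes the left $R$-module homomorphism projecting onto the $g$-th coordinate. Taking the rows of $N$ themselves as a generating set for ${}_R N$ and applying $\pi_g$ reads off precisely the last column of $N$, so
\[
I_g = {}_R\langle \theta(A_{g1}),\ldots,\theta(A_{gg})\rangle,
\]
which is the left-hand side of the claim. On the other hand, Theorem~\ref{polynomialBasis} applied now to ${}_R N$ tells me that the rows of $G^*({}_R N)$ also generate ${}_R N$ (the full-rank hypothesis passes through $\widetilde{\theta}$ since this operation is involutive and row and column rank coincide over the skew field $\Quot(R)$). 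Moreover $G^*({}_R N)$ is itself lower triangular by Lemma~\ref{triangular} combined with the minimum-degree selection built into the definition of $G^*$, so only its last row has a nonzero $g$-th entry, namely $G^*({}_R N)_{gg}$. Projecting via $\pi_g$ therefore yields $I_g = {}_R\langle G^*({}_R N)_{gg}\rangle$, the right-hand side of the claim.

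Equating the two descriptions of $I_g$ finishes the proof. The main obstacle I expect is not the projection argument itself but the bookkeeping needed to confirm that full rank, lower triangularity and the generation property all transfer cleanly under $\widetilde{\theta}$ from $G^*({}_R M)$ to $G^*({}_R \widetilde{\theta}(G^*({}_R M)))$; once these structural facts are verified, the two equalities above express the same left ideal in two different generating sets, and the lemma is immediate.
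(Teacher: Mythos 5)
Your proof is correct, and it is a somewhat more direct version of the paper's own argument. The paper proceeds in two steps: it first applies the argumentation of Lemma~\ref{columnGenerator} to the matrix $N=\widetilde{\theta}(G^*({}_R M))$, which identifies the left-hand ideal with ${}_R\langle \mathcal{G}({}_R N)_{gg}\rangle$, i.e.\ it passes through the reduced Gr\"obner basis over the rational algebra $R$, and then replaces $\mathcal{G}$ by $G^*$ using module identities under $\widetilde{\theta}$ together with the lower triangularity of both matrices. You instead apply the projection idea once, directly to $G^*({}_R N)$: the rows of $N$ and the rows of $G^*({}_R N)$ generate the same left module ${}_R N$ (Theorem~\ref{polynomialBasis} applies since $N$ is upper triangular with nonzero diagonal, hence of full rank, and has entries in $R_*$ because $\theta$ is an involution of $R_*$), so the image of ${}_R N$ under the projection onto the last coordinate is generated on the one hand by the last-column entries $\theta(G^*({}_R M)_{g1}),\dots,\theta(G^*({}_R M)_{gg})$ and on the other hand, by triangularity, by the single entry $G^*({}_R N)_{gg}$. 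The underlying idea --- a triangular generating set of a module reads off the position-$g$ elimination ideal --- is exactly that of Lemma~\ref{columnGenerator}, so the two proofs are close in spirit; what your version buys is that it stays entirely with $G^*$ and avoids the intermediate rational Gr\"obner basis $\mathcal{G}(\widetilde{\theta}(\mathcal{G}({}_R M)))$ and the module-identity bookkeeping under $\widetilde{\theta}$, which is the least transparent part of the printed proof. One cosmetic slip: the lower triangularity of $G^*({}_R N)$ is not a consequence of Lemma~\ref{triangular} (which concerns the reduced basis over $R$) but is built into the definition of $G^*$, since its $i$-th row has leading position $i$; this does not affect your argument.
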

\begin{proof}
Using the argumentation given in the proof of Lemma \ref{columnGenerator}  
we obtain
$${}_R\langle \theta(G^*( {}_{R}M)_{g1}), \dots, \theta(G^*( {}_{R}M)_{gg})\rangle=
{}_R \langle \mathcal{G}(\widetilde{\theta}(G^*( {}_{R}M))_{gg} \rangle.$$
Note the module identities ${}_{R}G^*( {}_{R}M) = {}_{R}\mathcal{G}( {}_{R}M) 
\Rightarrow$
\[
\widetilde{\theta}(G^*( {}_{R}M))_{R} = 
\widetilde{\theta}(\mathcal{G}( {}_{R}M))_{R}
\Rightarrow \;
{}_{R}G^*(\widetilde{\theta}(G^*( {}_{R}M))) = 
{}_{R}\mathcal{G}(\widetilde{\theta}(\mathcal{G}( {}_{R}M))).
\]
According to the latter identity and to the fact that both
$\mathcal{G}(\widetilde{\theta}(\mathcal{G}( {}_{R}M))$ and 
$G^*(\widetilde{\theta}(G^*( {}_{R}M)))$ are lower triangular matrices, 
we obtain
${}_{R}\langle \mathcal{G}(\widetilde{\theta}(\mathcal{G}( {}_{R}M))_{gg} \rangle=
{}_{R}\langle G^*(\widetilde{\theta}(G^*( {}_{R}M))_{gg} \rangle$.
\end{proof}

Now we are ready to formulate the polynomial version of Algorithm \ref{diagonal}.


\newpage 

\begin{algorithm}[\texttt{Polynomial diagonalization with Gr\"obner Bases}]  \label{diagonalPoly}
\begin{algorithmic}
  \STATE 
	\REQUIRE $M \in R^{g \times g}$ of full rank, $\theta$ an involution on $R_*$ and $\widetilde{\theta}$ as above.
	\ENSURE $R$-unimodular matrices $U, V, D \in R_*^{g \times g}$ such that $U \cdot M \cdot V = D = \Diag ( r_1, \dots, r_g )$.
\STATE  Find $T \in R^{g \times g }$ unimodular such that $T M \in R_*^{g \times g}$
\STATE   $M^{(0)} \leftarrow T M$, \quad $U \leftarrow T $, \quad $V \leftarrow \id_{g \times g} $ 
\STATE  $i \leftarrow 0$ 
 \WHILE{$M^{(i)}$ is not a diagonal matrix { \bf or } $i\equiv_2 1$}
\STATE     $i \leftarrow i+1$
\STATE    Compute $U^{(i)}$ so that  $U^{(i)} \cdot M^{(i-1)} = \mathcal{G}({}_{R_*} M^{(i-1)}) \in R_*^{l \times g}$
\STATE     Select $\{t_1, \dots, t_g\} \subseteq \{1, \dots, l\}$ as in (\ref{indexChoice})
\STATE     $ U^{(i)} \leftarrow [(U^{(i)})_{t_1}, \dots, (U^{(i)})_{t_g} ]^T$ 
\STATE     $M^{(i)} \leftarrow \widetilde{\theta}( G^*({}_R M) )$
              \IF{$i \equiv_2 0$}
                   \STATE  $V \leftarrow V \cdot \widetilde{\theta}(U^{(i)}) $
                    \ELSE \STATE $ U \leftarrow U^{(i)} \cdot U $
              \ENDIF
   \ENDWHILE
\RETURN $(U,V,M^{(i)})$
\end{algorithmic}
\end{algorithm}

\begin{remark}
\label{unimod}
It is important to mention, that the matrices $U,V,D$ (hence the elements $r_i$ as well) have entries from $R_*$, that is polynomials. However, $U$ and $V$ are only unimodular over $R$ and, in general, they need not be unimodular over $R_*$ for obvious reasons. In each presented example we will investigate the case, whether $U$ or $V$ will be unimodular over $R_*$ as well. After all, we come up with the Conjecture \ref{conjUnimod}. 
\end{remark}

\begin{theorem}
 Algorithm \ref{diagonalPoly} terminates with the correct result.
\end{theorem}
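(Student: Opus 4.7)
The plan is to reduce the proof to the already established Theorem \ref{DiagonalForm} by observing that the polynomial version merely substitutes $G^*({}_{R_*}M_*)$ for $\mathcal{G}({}_{R}M)$ throughout, while all the $R$-module identities that drove the original proof are preserved. First I would note that the preprocessing step via Lemma \ref{inMod} produces a $R$-unimodular $T \in R_*^{g\times g}$ with $TM \in R_*^{g\times g}$, so after initialization we are working entirely inside $R_*$ but the quantities $U\cdot M$ and $M\cdot V$ remain equal to their $R$-counterparts up to $R$-unimodular changes. This reduces the task to proving correctness and termination of the loop starting from a matrix in $R_*^{g\times g}$.

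Next I would verify that each loop iteration is well-defined and produces $R$-unimodular transformations. The Gröbner basis $\mathcal{G}({}_{R_*}M^{(i-1)})$ may have more than $g$ rows, but by Theorem \ref{polynomialBasis} one can extract a subset indexed by $\{t_1,\ldots,t_g\}$ whose corresponding rows form $G^*({}_RM^{(i-1)})$ and generate the same left $R$-module as $M^{(i-1)}$. Proposition \ref{trafoMatrix} then guarantees that the selected $U^{(i)} \in R_*^{g\times g}$ is $R$-unimodular and satisfies $U^{(i)}M^{(i-1)} = G^*({}_RM^{(i-1)})$. Consequently, the accumulated $U$ and $V$ remain $R$-unimodular, and the product identities
\[
M^{(i)} = U^{(i-1)}\cdots U^{(1)}\cdot (TM)\cdot \widetilde{\theta}(U^{(2)})\cdots\widetilde{\theta}(U^{(i)})
\]
(with the obvious modification in the odd case) hold exactly as in Theorem \ref{DiagonalForm}.

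For termination and the diagonal shape, I would replay the induction on $g$ from Theorem \ref{DiagonalForm}, substituting Lemma \ref{columnGeneratorPoly} for Lemma \ref{columnGenerator}. The lemma yields the ascending chain of left ideals
\[
{}_R\langle (M^{(i)})_{gg}\rangle \subseteq {}_R\langle\theta((M^{(i+1)})_{gg})\rangle
\]
whose generators have the same degree thanks to the degree-preserving nature of $\theta$ and Corollary \ref{DegreeInv}. Since $R$ is a PID and the degree of the generator is a non-negative integer that cannot decrease, the chain stabilizes after finitely many steps $r$; at that point $(M^{(r)})_{gg}$ is a strict left divisor of every $(M^{(r)})_{ig}$ for $i<g$, so the reducedness of $G^*$ together with the triangularity from Lemma \ref{triangular} forces the last row and column to decouple into the block form $M' \oplus (M^{(r+1)})_{gg}$. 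Induction on $g$ applied to the $(g-1)\times(g-1)$ block $M'$ finishes the argument, with the parity condition $i \equiv_2 1$ in the \textbf{while} guard ensuring we end on an even iteration so that $V$ receives its last update and the accumulated equation $UMV = D$ is the required one.

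The main obstacle I anticipate is the careful bookkeeping of the non-square Gröbner basis $\mathcal{G}({}_{R_*}M^{(i-1)}) \in R_*^{l\times g}$ and the selection step (\ref{indexChoice}): one must confirm that the chosen indices $t_1,\ldots,t_g$ exist (guaranteed by full rank via Lemma \ref{smallestElementInPoly}), that the resulting $U^{(i)}$ is $R$-unimodular despite possibly not being $R_*$-unimodular (this is exactly Proposition \ref{trafoMatrix}), and that the degree-stabilization argument for the $(g,g)$-entry uses the minimal-degree element $b_g$ rather than an arbitrary Gröbner basis element—this is precisely what Lemma \ref{MinDeg} was designed to handle.
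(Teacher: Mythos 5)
Your proposal is correct and takes essentially the same route as the paper, whose proof consists exactly of invoking Proposition \ref{trafoMatrix} and replacing Lemma \ref{columnGenerator} by Lemma \ref{columnGeneratorPoly} in the proof of Theorem \ref{DiagonalForm}; your write-up merely spells out the bookkeeping (the preprocessing via Lemma \ref{inMod}, the index selection, and the degree-stabilization) that the paper leaves implicit. One small wording slip: along the ascending chain ${}_R\langle (M^{(i)})_{gg}\rangle \subseteq {}_R\langle\theta((M^{(i+1)})_{gg})\rangle$ the degree of the generator is non-increasing (not ``cannot decrease''), and since it is a non-negative integer it must stabilize, which is what forces the eventual equality of ideals.
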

\begin{proof}
Using Proposition \ref{trafoMatrix} and
replacing Lemma \ref{columnGenerator} by Lemma \ref{columnGeneratorPoly}
in the proof of Theorem \ref{DiagonalForm} provides the claim.
\end{proof}

\begin{example}\label{runningEx2}
Suppose $R=K(x)[\partial; \id, \frac{d}{dx}]$ and $R_*=K[x][\partial; \id, \frac{d}{dx}]$. 
Let us define an involution on $R_*$ by $\theta(\partial) = -\partial$ and $\theta(x) = x$.
Let 
\begin{displaymath}
 M=\left[ \begin{array}{cc} \partial^2-1 & \partial+1 \\ \partial^2+1 & \partial-x  \end{array}\right]
\in R^{2 \times 2}.
\end{displaymath}
Evidently $T=\id_{2 \times 2}$ and thus $M^{(0)}:=M,$ $U=V=\id_{2 \times 2}$ and $i=0$.
\begin{itemize}
 \item[1:]Since $M^{(0)}$ is not diagonal, go into the while loop
\begin{itemize}
 \item[$\bullet$] $i \leftarrow 1$. Since \begin{footnotesize}                 
$
\;\;\left[ \begin{array}{cc} -x\partial-\partial+x^2+x+1 & x\partial+\partial+x\\ 
 -\partial^2+x\partial-\partial+x+2 & \partial^2+2\partial+1 \\ \partial-x & -\partial-1 \end{array}\right]
$\end{footnotesize} $M^{(0)}=\mathcal{G}({}_{R_*}M^{(0)})$ \\

where $\mathcal{G}({}_{R_*}M^{(0)})=$\begin{footnotesize}                 
$
\;\;\left[ \begin{array}{cc} 
x^2\partial^2+2x\partial^2+\partial^2+2x\partial+2\partial-x^2-1 & 0\\x\partial^3+\partial^3+x\partial^2+5\partial^2-x\partial+3\partial-x-1 & 0\\
-x\partial^2-\partial^2-2\partial+x-1 & 1
\end{array}\right]$\end{footnotesize} and $\;\;i \equiv_2 1$\\

$M^{(1)} \leftarrow $\begin{footnotesize} $\left[ \begin{array}{cc} 
x^2\partial^2+2x\partial^2+\partial^2+2x\partial+2\partial-x^2-1 & -x\partial^2-\partial^2+x-1 \\
0 & 1 \end{array}\right]$\end{footnotesize}\\

$U \leftarrow $ \begin{footnotesize} $
\left[ \begin{array}{cc} -x\partial-\partial+x^2+x+1 & x\partial+\partial+x \\ \partial-x &  -\partial-1\end{array}\right]$                \end{footnotesize}
\end{itemize}
\item[2:] Since $M^{(1)}$ is not diagonal, go into the while loop
\begin{itemize}
\item[$\bullet$] $i \leftarrow 2$. Since
\begin{footnotesize}
$\;\;\left[\begin{array}{cc} 1& x\partial^2+\partial^2-x+1 \\ 0 & 1\end{array}\right]$\end{footnotesize}$
 M^{(1)}= \mathcal{G}({}_{R_*} M^{(1)})\;\;$ and $\;\;i \equiv_2 0$ \\

$M^{(2)} \leftarrow \begin{footnotesize} \left[ \begin{array}{cc} x^2\partial^2+2x\partial^2+\partial^2+2x\partial+2\partial-x^2-1 & 0 \\ 0 & 1 \end{array}\right]\end{footnotesize}$, \\

$V\leftarrow \begin{footnotesize} \left[\begin{array}{cc} 1 & 0\\(x+1)\partial^2+2\partial-x+1 & 1\end{array}\right]\end{footnotesize}$

\end{itemize}
\item[3:] Since $i$ is even and $M^{(2)}$ is diagonal, the algorithm returns $U$ and $V$.
\end{itemize}
And indeed, the algorithm outputs the claimed result, since
\begin{center}
$UMV=$\begin{footnotesize}$\left[\begin{array}{cc}
x^2\partial^2+2x\partial^2+\partial^2+2x\partial+2\partial-x^2-1 & 0\\0 & 1\end{array}\right]$\end{footnotesize}.
\end{center}
In view of Remark \ref{unimod}, let us analyze the transformation matrices for $R_*$-unimodularity. Indeed,
$V$ is such since it admits an inverse $V'$. On the contrary, $U$ is only unimodular over $R$ and not over $R_*$, since $U\cdot Z = W$ and $W$ is first invertible in the localization:
\[
\begin{footnotesize}
V' = \left[
\begin{array}{cc}
1 & 0 \\
-(x+1) \d^{2}+x-2\d-1 & 1
\end{array}\right], 
Z = \left[
\begin{array}{cc}
2\d+2 & (x+1)\d+x-2 \\
2(\d-x) & (x+1)\d-x^{2}-x-3
\end{array} \right],
\end{footnotesize}
\]
\[
\begin{footnotesize}
W = \left[
\begin{array}{cc}
0 & -4x^{2}-8x-4 \\
2 & 5x+5
\end{array}\right]
\end{footnotesize}.
\]
\end{example}

Algorithm \ref{diagonalPoly} can be extended to $M \in R^{g \times q}$ along the lines already 
presented in Remark \ref{nonSquare}. 

\begin{example}
By executing the algorithm in the 1st rational shift algebra $\K\langle t,S \mid St=tS+S \rangle$
on the same matrix as in the previous example, where $\partial$ is  replaced with
the forward shift operator $S$, we obtain a diagonal form \\
$\Diag \begin{footnotesize}  ( \ (t^{2}+3t+2)S^{2}+2(t+1)S-t^{2}-t+2, 1) \end{footnotesize} = $
\[
\begin{footnotesize}
\left[\begin{array}{cc}
-tS-S+t^{2}+2t & tS+S+t+2 \\
-S+t+1 & S+1
\end{array}\right]
\cdot
\left[\begin{array}{cc}
S^{2}-1 & S+1 \\
S^{2}+1 & S-t
\end{array}\right]
\cdot
\left[\begin{array}{cc}
1 & 0 \\
-tS^{2}-2S^{2}-2S+t & 1
\end{array}\right]
\end{footnotesize}.
\]
As in the previous example, it turns out that $V$ (but not $U$) is even $R_*$-unimodular.
\end{example}

\section{Implementation and Examples}
\label{Impl&Ex}

\subsection{Jacobson Form}

Let $R$ be a left and right Euclidean domain.
Inspired by the Smith form, we will focus on how to sharpen the result of the already discussed
diagonal form. 
\begin{theorem}\label{TotalDivisorForm}\citep{Cohn,Jacobson}
Every matrix $M \in R^{g \times q}$ is associated to
a certain diagonal matrix, namely $\Diag(m_1, \dots, m_\ell, 0, \dots, 0)$ such that additionally 
\begin{align}\label{RelationGeneratingNF} 
 R m_{i+1} R \subseteq m_{i} R \cap R m_{i}
\end{align}
holds for all $i=1, \dots, \min\{g,q\}-1$. 
\end{theorem}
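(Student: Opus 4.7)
The plan is to bootstrap from Theorem \ref{DiagonalForm}: first produce an unsharpened diagonal form via Algorithm \ref{diagonalPoly}, and then refine it to satisfy the two-sided divisibility chain (\ref{RelationGeneratingNF}) by iterated operations on $2 \times 2$ diagonal blocks. Applying Algorithm \ref{diagonalPoly} to $M$ yields $R$-unimodular $U_0, V_0$ with $U_0 M V_0 = \Diag(d_1,\ldots,d_\ell,0,\ldots,0)$, so only the divisibility relations among consecutive $d_i$ remain to be addressed. Any $R$-unimodular transformation acting only on rows $i,i+1$ and columns $i,i+1$ of a diagonal matrix leaves the other diagonal entries unchanged; hence it suffices to treat $2 \times 2$ non-singular diagonal matrices $\Diag(a,b)$ and then sweep the pairs $(d_i, d_{i+1})$ along the diagonal, iterating until the full chain stabilizes.

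For the $2\times 2$ case, given $\Diag(a,b)$ with $a, b \neq 0$, the core step uses Bezout's theorem (Proposition \ref{R_HIB}). The unimodular row operation that adds the first row to the second produces $\bigl(\begin{smallmatrix}a & 0\\a & b\end{smallmatrix}\bigr)$, and re-applying Theorem \ref{DiagonalForm} returns $\Diag(g_\ell, h_\ell)$ where $g_\ell$ is a left greatest common divisor of $\{a,b\}$, so in particular $g_\ell$ left-divides $h_\ell$. Conjugating this step by the involution $\widetilde\theta$ defined before Algorithm \ref{diagonal} (which exists on $R$ in the class of Ore-localized $G$-algebras under consideration) swaps left and right and yields a right-gcd on the diagonal. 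Alternating these left and right reductions produces a sequence of pairs $(a^{(k)}, b^{(k)})$ with $\Deg(a^{(k)})$ non-increasing and $\Deg(a^{(k)}) + \Deg(b^{(k)})$ constant; since degrees are non-negative integers, the iteration stabilizes at a pair for which $b \in aR \cap Ra$, which is the one-sided version of (\ref{RelationGeneratingNF}).

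The hard part is upgrading $b \in aR \cap Ra$ to the two-sided containment $RbR \subseteq aR \cap Ra$: in a non-commutative PID, left and right gcds may differ, and the two-sided ideal $RbR$ is strictly larger than either principal one-sided ideal in general. This is handled through the theory of \emph{bounds} of elements (the generators of the largest two-sided ideals contained in $aR$ respectively in $Ra$), following Cohn \citep{Cohn} and Jacobson \citep{Jacobson}. In simple PIDs such as the first Weyl algebra $W_1(K)$, the only non-zero two-sided ideal is $R$ itself, which forces every non-unit invariant in the Jacobson form to appear at the last non-zero diagonal position, with all earlier non-zero diagonal entries equal to $1$. Verifying termination of this bound-theoretic refinement, and its compatibility with the outer iteration on consecutive pairs, is the main technical obstacle; once it is in place, sweeping through all consecutive pairs of the diagonal produces the Jacobson form satisfying (\ref{RelationGeneratingNF}).
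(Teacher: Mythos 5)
First, note that the paper does not prove this statement at all: it is quoted from the classical literature (\citep{Cohn,Jacobson}), so there is no internal proof to compare against, and your sketch has to stand on its own. It does not, for two reasons.

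The first problem is mechanical: your "core step" is vacuous. Adding the first row to the second is a left row operation, so the left row module of the resulting matrix $\left[\begin{smallmatrix} a & 0 \\ a & b \end{smallmatrix}\right]$ is exactly the module generated by $(a,0)$ and $(0,b)$; its reduced Gr\"obner basis is again $\{(a,0),(0,b)\}$, and Algorithm \ref{diagonal} simply returns $\Diag(a,b)$, not $\Diag(g_\ell,h_\ell)$ with $g_\ell$ a one-sided gcd. To make $a$ and $b$ interact you must place them in the same \emph{column} (a right operation) before the left Gr\"obner reduction, i.e.\ use genuinely two-sided mixing; moreover the claim that the resulting cofactor is divisible by the gcd needs an argument (over a non-commutative PID it is not the na\"ive statement "$\gcd$ divides $\operatorname{lcm}$" but a consequence of Lemma \ref{columnGenerator}-type reasoning plus the degree invariance of Lemma \ref{OrderOfSystem}). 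As written, your alternating reduction never changes the pair $(a,b)$ at all.

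The second and decisive gap is the one you yourself flag: even a corrected gcd alternation only stabilizes at $b\in aR\cap Ra$, which is strictly weaker than the total-divisor condition $RbR\subseteq aR\cap Ra$ of (\ref{RelationGeneratingNF}). For example, over the rational Weyl algebra $B_1(K)$ the pair $a=b=\partial$ satisfies $b\in aR\cap Ra$, yet $R\partial R=R\not\subseteq \partial R$, and indeed $\Diag(\partial,\partial)$ is equivalent to $\Diag(1,c)$ with $\Deg(c)=2$; so your stabilization criterion stops too early. The entire content of the theorem beyond the diagonal form already given by Theorem \ref{DiagonalForm} is precisely the descent step "if $a$ does not totally divide $b$, the degree of the $(1,1)$ entry can be strictly reduced" (this is what the paper's Lemma \ref{TF} does in the special case of the Weyl algebra, by multiplying with $x^i$, i.e.\ by non-units), together with termination of that descent and of the sweep over consecutive pairs. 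You explicitly defer this to "the theory of bounds" and call its termination "the main technical obstacle"; leaving it unproved means the theorem's characteristic condition (\ref{RelationGeneratingNF}) is never established. (A minor additional slip: $W_1(K)$ is simple but is not a PID; the simple Euclidean domain relevant here is the rational Weyl algebra $B_1(K)$.)
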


Due to \cite[Theorem 31]{Jacobson} the elements $m_i$ are unique up to similarity.
Two elements $m_i$ and $n_i$ are called \textbf{similar} if and only if there
exist $a, b \in R$ such that 
$$ am_i = n_ib, \;\;\;\; R=aR+n_iR, \;\;\;\; R=Rb+Rm_i.$$ 
Using the notation of the previous theorem, we call 
$\Diag(m_1, \dots, m_\ell, 0, \dots, 0)$ a \textbf{Jacobson normal form} of $M$.
Note that (\ref{RelationGeneratingNF}) is hard to tackle constructively in general, 
since it requires to work with 
the intersection of a left and a right ideal. 
This difficulty disappears if $R$ has only trivial two-sided ideals, that is when $R$ is simple. Then each matrix $M$ possesses a Jacobson form $ \Diag(1, \dots, 1, m_M, 0, \dots, 0)$ with $m_M \in R$.\\

\begin{lemma}\label{OrderOfSystem}
Let $A_*$ be a $G$-algebra, $A=\Quot(A_*)$ and $R = A[\partial; \sigma, \delta]$. 
Let $U, V$ be unimodular and $a, b, c, d \in R\setminus \{0\}$ such that 
\begin{align}\label{OrderOfSystemEq} 
 U \Diag(a,b)V= \Diag(c,d).
\end{align}
Then $\deg(a)+\deg(b)=\deg(c)+\deg(d)$.
\end{lemma}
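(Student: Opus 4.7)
The plan is to convert the matrix identity into an isomorphism of left $R$-modules and then compare their ranks over the skew field $A = \Quot(A_*)$, exploiting that $R = A[\d;\sigma,\delta]$ is a free left $A$-module.

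First I would record the following rank computation. As a left $A$-module, $R = \bigoplus_{i \geq 0} A\d^i$ is free with basis $\{1,\d,\d^2,\ldots\}$. For any nonzero $m \in R$ of degree $n$, left Euclidean division (available by Proposition \ref{R_HIB}) shows that every coset in $R/Rm$ has a unique representative of $\d$-degree strictly less than $n$. Consequently $R/Rm$ is a free left $A$-module with basis $\{1,\d,\ldots,\d^{n-1}\}$, hence of rank $\deg(m)$.

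Next I would translate the hypothesis into module language. Because $U$ is unimodular, right multiplication by $U$ is an automorphism of $R^{1\times 2}$, so $R^{1\times 2}U = R^{1\times 2}$. Combined with $U\Diag(a,b)V = \Diag(c,d)$, this gives
\[
R^{1\times 2}\Diag(a,b)\cdot V \;=\; R^{1\times 2}U\Diag(a,b)V \;=\; R^{1\times 2}\Diag(c,d).
\]
Right multiplication by $V$ therefore induces a well-defined left $R$-linear map
\[
\Phi\colon R^{1\times 2}/R^{1\times 2}\Diag(a,b) \longrightarrow R^{1\times 2}/R^{1\times 2}\Diag(c,d),\qquad [v]\mapsto [vV],
\]
with a two-sided inverse given by right multiplication by $V^{-1}$. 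Since both $\Diag(a,b)$ and $\Diag(c,d)$ are diagonal, the two quotients split as direct sums $R/Ra \oplus R/Rb$ and $R/Rc \oplus R/Rd$ respectively.

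Restricting scalars along the inclusion $A \hookrightarrow R$, the isomorphism $\Phi$ is in particular left $A$-linear, so the two sides are isomorphic as left $A$-modules. By the first step they are free left $A$-modules of ranks $\deg(a)+\deg(b)$ and $\deg(c)+\deg(d)$; since $A$ is a skew field, rank is an invariant, and the two integers must coincide. The only step that requires genuine care is the bookkeeping that shows $\Phi$ is well defined and bijective; once the ideal identity $R^{1\times 2}\Diag(a,b)V = R^{1\times 2}\Diag(c,d)$ is in place, the rest is routine.
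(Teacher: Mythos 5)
Your proof is correct and follows essentially the same route as the paper: the identity $U\Diag(a,b)V=\Diag(c,d)$ yields an isomorphism of the quotient modules $R/Ra\oplus R/Rb\cong R/Rc\oplus R/Rd$, which after restricting scalars to the skew field $A$ forces the $A$-dimensions, i.e.\ the sums of degrees, to agree. The only difference is cosmetic (rows and left ideals instead of the paper's right ideals) plus the welcome extra detail that Euclidean division exhibits $\{1,\partial,\ldots,\partial^{\deg(m)-1}\}$ as an $A$-basis of $R/Rm$, a fact the paper's proof uses implicitly.
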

\begin{proof}
 Due to (\ref{OrderOfSystemEq}) there exists a $R$-module isomorphism 
 $$\phi: R / a R \oplus R / b R \rightarrow R / c R\oplus R / d R.$$
Since $A$ is a skew field, $\phi$ induces an $A$-vector space isomorphism.
Thus the $A$-dimensions of $R / a R \oplus R / b R$ and $R / c R \oplus R / d R$, which
are nothing else that the sums of degrees, coincide. 
\end{proof}

Of course, inductive argument implies that sums of degrees of diagonal entries of 
two diagonal presentation matrices of the same module are the same. \\

\noindent
\textbf{Jacobson normal form in the 1st Weyl algebra}. Let $R$ be the rational Weyl algebra $K(x)[\d;1,\frac{\d}{\d x}]$, which is a simple domain.

\begin{lemma}\label{TF}
Consider $a, b \in R$ with $\Deg(a) > 0$, $b \neq 0$ and $\Deg(b)\geq \Deg(a)$. Then 
there exists $i \in \left\lbrace 0, \dots, \Deg (b) - \Deg (a)+1\right\rbrace$ 
such that $a$ is not a strict right divisor of $bx^i$.
\end{lemma}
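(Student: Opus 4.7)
The plan is to argue by contradiction: assume that $a$ is a strict right divisor of $bx^i$ for every $i \in \{0, 1, \ldots, n\}$, where $n := \Deg(b) - \Deg(a) + 1$. Write $bx^i = f_i a$ for some $f_i \in R$; since $x$ has $\partial$-degree zero and $R$ is a domain, comparing $\partial$-degrees forces $\Deg(f_i) = \Deg(b) - \Deg(a) = n-1$. The left $K(x)$-vector space $V := \{r \in R : \Deg(r) \leq n-1\}$ has basis $\{1, \partial, \ldots, \partial^{n-1}\}$, hence $K(x)$-dimension $n$. Consequently the $n+1$ elements $f_0, \ldots, f_n \in V$ satisfy a non-trivial left $K(x)$-linear relation: there exist $c_0, \ldots, c_n \in K(x)$, not all zero, with $\sum_{i=0}^{n} c_i f_i = 0$.

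Multiplying this relation on the right by $a$ gives $\sum_{i=0}^{n} c_i b x^i = 0$ in $R$. The goal is to show that this identity forces every $c_i$ to vanish. I would use the Leibniz-type formula
\[
bx^i = \sum_{l=0}^{i} \binom{i}{l} x^{i-l}\, \nabla^l(b),
\]
where, writing $b = \sum_k b_k \partial^k$ in normal form, $\nabla$ denotes the formal derivative with respect to $\partial$. This identity follows by induction from $[\partial, x] = 1$, which implies $[b, x] = \nabla(b)$. Substituting and using that the factors $c_i, x^{i-l} \in K(x)$ commute past binomials yields
\[
0 \;=\; \sum_{l \geq 0} D_l(c)\, \nabla^l(b), \qquad D_l(c) := \sum_{i=0}^{n} \binom{i}{l} c_i\, x^{i-l} \in K(x).
\]

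The heart of the proof is to extract $D_l(c) = 0$ for $l = 0, 1, \ldots, \Deg(b)$. Since $\nabla^l(b) = \sum_k b_k \tfrac{k!}{(k-l)!} \partial^{k-l}$, the coefficient of $\partial^m$ in $\sum_l D_l(c)\, \nabla^l(b)$ equals $\sum_{l \geq 0} l!\, \binom{m+l}{l}\, b_{m+l}\, D_l(c)$. Setting $m = \Deg(b) - s$, the sum runs over $l \leq s$, and the coefficient of the highest-index term $D_s(c)$ equals $s!\, \binom{\Deg(b)}{s}\, b_{\Deg(b)}$, which is nonzero because the characteristic is zero and because $b_{\Deg(b)} \neq 0$. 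A downward induction on $m$ (equivalently, upward on $s$) therefore yields $D_0(c) = D_1(c) = \ldots = D_{\Deg(b)}(c) = 0$.

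Finally, the system
\[
D_l(c) = c_l + (l+1)\, c_{l+1}\, x + \binom{l+2}{l} c_{l+2}\, x^2 + \ldots + \binom{n}{l} c_n\, x^{n-l} = 0
\]
is triangular in $l$: starting at $l = n$ it gives $c_n = 0$, then $c_{n-1} = 0$, and so on down to $c_0 = 0$, contradicting the non-triviality of $(c_0, \ldots, c_n)$. The main technical obstacle is the normal-form bookkeeping in the middle step: tracking how the $c_i \in K(x)$ interact with the binomial Leibniz expansion of $bx^i$ and the formal derivatives $\nabla^l(b)$, and then reading off the coefficient of each $\partial^m$ cleanly enough for the triangular structure to emerge. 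Once this reduction is carried out, the two successive triangular systems make the contradiction essentially automatic, and the characteristic-zero hypothesis on $K$ is visibly essential.
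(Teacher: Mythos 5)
Your argument is correct, but it takes a genuinely different route from the paper's. The paper argues by degree descent: assuming $bx^i=q_ia$ for all $i$, it forms the commutator $r_1=bx-xb$, which is the formal $\partial$-derivative of $b$, hence nonzero of degree $\Deg(b)-1$ (characteristic zero), and again right-divisible by $a$; iterating, each step drops the degree by exactly one, so after at most $\Deg(b)-\Deg(a)+1$ steps one has a nonzero element of degree $\Deg(a)-1$ with a right factor of degree $\Deg(a)$, contradicting the Euclidean degree count. You instead use a pigeonhole/dimension count: the $n+1$ quotients $f_i$ (with $n:=\Deg(b)-\Deg(a)+1$) lie in the $n$-dimensional left $K(x)$-space of elements of degree at most $n-1$, so there is a nontrivial relation $\sum_i c_i f_i=0$, hence $\sum_i c_i b x^i=0$; the Leibniz expansion $bx^i=\sum_l\binom{i}{l}x^{i-l}\nabla^l(b)$ and two successive triangular eliminations (first on the normal-form coefficients of $\partial^m$, giving $D_l(c)=0$, then on the $D_l$ themselves) force all $c_i=0$, a contradiction. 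Both proofs ultimately rest on $[b,x]$ being the formal derivative of $b$ and on characteristic zero, but yours replaces the divisibility/Euclidean contradiction by uniqueness of normal forms plus linear algebra, and it makes the bound $\Deg(b)-\Deg(a)+1$ transparent as a dimension count, at the price of heavier bookkeeping; the paper's descent is shorter. One point you should make explicit: to start your second triangular solve at $l=n$ you need $D_n(c)=0$ among the extracted equations, i.e.\ $n\le\Deg(b)$, and this is precisely where the hypothesis $\Deg(a)>0$ enters (the lemma fails for $\Deg(a)=0$, a unit dividing everything, and your extraction would then indeed stop at $l=\Deg(b)=n-1$).
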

\begin{proof}
Suppose that for every $i \in \left\lbrace 0, \dots, \Deg(b) - \Deg(a)+1\right\rbrace $ there exists a $q_i \in R$ such that $b x^i=q_i a$. 
Let $b = b_{n}(x)\partial^{n} + \dots + b_1(x)\partial + b_0(x)$. Note, that for any $k\in \N$
the equality $\d^k x = x \d^k + k \partial^{k-1}$. Thus we define
$r_1 := bx-xb = \sum_{i=1}^{n} b_i(x)i\partial^{i-1}$
 with $\Deg(r_1)=n-1<\Deg(b)$ and $r_1 \not= 0$ since $\Deg(b)\geq 1$.
Since $b = q_0 a$ and $bx = q_1 a$, it follows that
$r_1 = bx- xb = (q_1 - q_0 x) a$, that is $a$ is a strict right divisor of $r_1$.
By proceeding with $bx^2$ and so on,
we obtain a sequence of non-zero polynomials $r_i$, such that
$\Deg(b) > \Deg(r_1) > \ldots $ and 
$a$ is a strict right divisor of $r_i$.
Since the degree of $r_i$ decreases exactly by 1 at each step, 
after at most $\Deg (b) - \Deg (a)+1$ iterations we obtain a polynomial of degree 
$\Deg(a) -1$, which is non-zero. Such a polynomial must contain a right factor of degree $\Deg(a)$, what is a contradiction.
\end{proof}




The Lemma (\ref{TF}) suggests an algorithm to compute the Jacobson form from a diagonal matrix over the rational Weyl algebra.
Suppose $M \in R^{g \times q}$, where $g=q=2$. The extension to $g, q \in \N$
is evident. Algorithm \ref{diagonalPoly} returns unimodular matrices $U, V$ such that $UMV=\Diag(m_1, m_2)$. 
Without loss of generality, assume that $\Deg(m_2)\leq \Deg(m_1)$. 
\begin{itemize}
 \item[1)] If $m_2$ is a unit, we get the Jacobson form just by replacing $U$ by $\Diag(1, m_2^{-1})U$. 
Otherwise, choose an exponent $i \in \N$ (it exists by the Lemma \ref{TF}) such that $m_1x^i=am_2+b$ with $\Deg(b)<\Deg(m_2)$ and $b \neq 0$. Then
$$ 
\left[ \begin{array}{cc} 1 & -a  \\ 0 & 1 \end{array}\right]\cdot
\left[ \begin{array}{cc} m_1 & 0  \\ 0 & m_2 \end{array}\right]\cdot 
\left[ \begin{array}{cc} 1 & x^i  \\ 0 & 1 \end{array}\right]=
\left[ \begin{array}{cc} m_1 & b  \\ 0 & m_2 \end{array}\right].
$$
Replace $U$ by $\left[ \begin{array}{cc} 1 & -a  \\ 0 & 1 \end{array}\right]U$ and
$V$ by $V\left[ \begin{array}{cc} 1 & x^i  \\ 0 & 1 \end{array}\right]. $
\item[2)] Now we apply Algorithm \ref{diagonalPoly} to 
the matrix $\left[ \begin{array}{cc} m_1 & r  \\ 0 & m_2 \end{array}\right]$. 
The result is then \\
$\Diag(m_1', m_2')$, where $\Deg(m_2')<\Deg(m_2)$. 
\end{itemize}
Thus, by iterating 1) and 2) we compute $U$ and $V$, such that $UMV=\Diag(1,m_M)$.

\begin{remark}
\label{remJF}
It seems to us, that the process of obtaining Jacobson normal form from an appropriate diagonal matrix can be generalized to any constructive simple Euclidean PID. Moreover, it can be applied even over non-simple domains. There, it is not guaranteed, that the result is so nice as Jacobson form, but the procedure above can simplify diagonal matrices.
\end{remark}

\begin{example}
\label{cexShift}
Over the first rational shift algebra $A = \K(t)\langle s \mid st=ts+s \rangle$ (which is a not a simple domain), we provide a counterexample for a statement, similar to \ref{TF}. Consider the $2\times 2$ diagonal matrix $D_1 = \Diag(s,s)$. Then the left module $M_1 = A^2/A^2 D_1$ (it is of dimension 2 over $\K(t)$) is annihilated by the two-sided ideal $\langle s \rangle$ and hence $D_1$ is not equivalent to a matrix of the form $D_2 = \Diag(1,p)$. If it were so, 
due to the $\K(t)$-dimension of $M_1$ and hence $M_2 = A^2/A^2 D_2$, we see that $\lm(p) = s^2$. Since $M_2 = A^2/A^2 \Diag(1,p) \cong A/Ap$, we have $\Ann_A M_2 = \langle p \rangle$.

Since it is not equal to $\Ann_A M_1 = \langle s \rangle$, $M_1\not\cong M_2$.
Hence, unlike over the Weyl algebra (or a simple domain \cite{Cohn}), there are many different types of normal forms even for diagonal matrices.
\end{example}

\begin{example}
\label{qWeyl}
Consider the rational $q$-Weyl algebra, cf. \ref{interestingOreAlg}. 
It is not simple since e.~g. the ideal $\langle (q-1) \d+ x^{-1} \rangle$ is a proper two-sided ideal.
Denote the generator by $f$, then, by the same argumentation as in the previous example we can
show, that $\Diag(f,f)$ is not equivalent to any matrix of the type $\Diag(1,g)$.
\end{example}

\noindent
Since little is known about normal forms of non-simple domains, this approach is very interesting to investigate in the future. \\



\noindent
\textbf{Cyclic vector method}.
Indeed, the existence of Jacobson form in simple Euclidean PID is very strong result. In particular, it tells us that any finitely generated module is cyclic and its presentation is a principal ideal.
The method of finding a cyclic vector in a module and obtain a left ideal, annihilating it,
is used in $D$-module theory. J.~Middeke in \citep{Middeke} did some investigations of
this question.

\begin{conjecture}
We conjecture, that the Jacobson form for, say, square matrix $M$ over a simple Euclidean domain $R$
can be computed from the given
diagonal form in the following way. 
Let $M = \Diag(m_1,\ldots,m_r)$. 
Since $\sum \deg(m_i)$ is invariant of the module $R^{r\times r}/M$, this number
can be used as a certificate for probabilistic approach. Namely,
consider polynomials $p_i$ of degree at most $\deg(m_i)-1$ with random coefficients
in $A$. Compute a generator $c\in R$ of the left annihilator ideal of a vector $[p_1,\ldots,p_r]^T$ in $R^{r\times r}/M$.
If $\deg c = \sum \deg(m_i)$,
then $\Diag(1, \dots, 1, c)$ is a Jacobson form of $M$. Otherwise one takes another set of random polynomials $p_i$ and repeats the procedure.
\end{conjecture}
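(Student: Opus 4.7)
The plan is to view the conjecture through the associated left $R$-module $N := R^{1 \times r}/R^{1 \times r} M$, which by the proof of Lemma \ref{OrderOfSystem} has $A$-dimension $d := \sum_{i=1}^r \deg(m_i)$, where $A = \Quot(A_*)$. For any representative $v = (p_1, \ldots, p_r)$ with $\deg p_i < \deg m_i$, the left annihilator $\Ann_R(v)$ is a left ideal of $R$, hence by Proposition \ref{R_HIB} principal: $\Ann_R(v) = R c$. Since $R v \cong R / R c$ as left $R$-modules, its $A$-dimension equals $\deg c$, so $\deg c \leq d$, with equality precisely when $R v = N$, i.e.\ when $v$ is a cyclic generator. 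This establishes the correctness of the degree test as a certificate.

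When equality holds, $N \cong R/Rc$. The diagonal matrix $D' := \Diag(1, \ldots, 1, c)$ presents the module $R^{1\times r}/R^{1\times r} D' \cong R/Rc$. In a simple Euclidean domain, Theorem \ref{TotalDivisorForm} guarantees that the Jacobson form has the shape $\Diag(1, \ldots, 1, m_M)$ with $m_M$ unique up to similarity; since $D'$ and the Jacobson form of $M$ present isomorphic left modules, they must be equivalent via unimodular left and right transformations. Thus $D'$ is itself a Jacobson form of $M$, and we can recover the transformation matrices from whatever procedure computed $c$ as a generator of the annihilator (e.g.\ a syzygy computation over $R$).

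The remaining, and hardest, part is the probabilistic guarantee: one must show that, as the coefficients of the $p_i$ range over $K$, the set of $v$ with $Rv = N$ is large enough to be hit with positive probability. Nonemptyness of the cyclic locus follows from Theorem \ref{TotalDivisorForm} applied to $M$ itself (which supplies a cyclic vector that may then be reduced componentwise modulo the $m_i$ to land inside the specified degree box). To promote nonemptyness to density, the natural route is to express the condition $Rv \neq N$ as the vanishing of finitely many determinantal minors, built out of the matrix whose rows are $v, \partial v, \partial^2 v, \ldots, \partial^{d-1} v$ reduced modulo $M$: if cyclicity is a Zariski-open condition on the finite-dimensional $K$-parameter space of the $p_i$, then over an infinite field a generic choice is cyclic, and an effective bound on the number of trials follows from the degree of the exceptional variety. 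The main obstacle is carrying out this genericity argument \emph{uniformly} for a simple Euclidean PID that arises as an Ore localization of a $G$-algebra. In the differential setting, this is the classical cyclic vector theorem of Deligne and Katz; covering shift, $q$-difference and mixed operator algebras in one stroke, with a quantitative bound on the failure probability, is where the real work lies, and probably requires reduction to the associated graded algebra together with a case analysis on $\sigma$ and $\delta$.
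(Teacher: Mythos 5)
The statement you are addressing is left as an open conjecture in the paper: the authors give no proof, only the remark that one would need probabilistic estimates on the length of random coefficients in the style of \citep{Kaltofen}. So there is no paper proof to compare against, and the question is whether your argument settles the conjecture. It does not, and you say so yourself. The part you do prove --- the certificate direction --- is essentially sound: with $N = R^{1\times r}/R^{1\times r}M$ of $A$-dimension $d=\sum\deg m_i$ (the dimension count of Lemma \ref{OrderOfSystem}), principality of $\Ann_R(v)=Rc$ from Proposition \ref{R_HIB} gives $\deg c = \dim_A Rv \le d$ with equality iff $v$ is cyclic, and then uniqueness up to similarity in Theorem \ref{TotalDivisorForm} (Jacobson's theorem) yields that $\Diag(1,\ldots,1,c)$ is a Jacobson form. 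Even here you gloss one step: ``present isomorphic modules, hence equivalent'' needs the standard but nontrivial fact that two same-size full-rank presentation matrices over a noncommutative PID of isomorphic modules are equivalent (equivalently, that $c$ similar to $m_M$ forces $\Diag(1,\ldots,1,c)\sim\Diag(1,\ldots,1,m_M)$, which uses the spare unit entries and fails for $r=1$, where similarity does not imply association).

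The genuine gap is the probabilistic claim, which is the actual content of the conjecture, and your proposed route to it has a concrete obstacle beyond being ``the real work.'' You want to express non-cyclicity as the vanishing of determinantal minors of the matrix with rows $v,\partial v,\ldots,\partial^{d-1}v$ and invoke Zariski-openness plus a Schwartz--Zippel count. But the base field here is the skew field $A=\Quot(A_*)$: rank over a noncommutative division ring is not detected by determinants, the natural substitute (Dieudonn\'e determinant or rank via noncommutative Gaussian elimination) is not a polynomial condition in the coefficients of the $p_i$, and the coefficients themselves range over the infinite-dimensional $K$-space $A$, not a finite-dimensional parameter space, unless one bounds coefficient length --- which is precisely where the Kaltofen-type estimates the authors ask for would enter. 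So the termination/success-probability statement, uniform over Ore localizations of $G$-algebras (differential, shift, $q$-cases at once), remains unproven in your proposal; what you have is a correct reduction of the conjecture to a quantitative cyclic-vector theorem, not a proof of it.
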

\noindent
One needs the probabilistic estimations on the length of random coefficients like in \citep{Kaltofen}. 

\subsection{Examples, Applications and Comparison}

\noindent
\textbf{Implementations of Jacobson normal form.} 
To the best of our knowledge, Jacobson normal form algorithm has been
implemented in \textsc{Maple} by Culianez and Quadrat \citep{CulianezQuadrat},
by Robertz et~al. \citep{Robertz, CQR}, by Middeke \citep{Middeke}
and by Cheng et~al \citep{BCL06, CL07, DCL08}. \\

We could not locate the download version of the implementation of \citep{CulianezQuadrat}.
The packages \textsc{FFreduce} \citep{BCL06} and \textsc{Modreduce}  \citep{CL07} are
available via personal request to their authors. The implementation of J.~Middeke \citep{Middeke} was, according to its author, merely a check of ideas and was not supposed to become a freely distributed package for \textsc{Maple}. This package is able to compute in the 1st Weyl algebra with coefficients in a differential field. \\

D.~Robertz informed us, that his implementation \citep{Robertz} directly follows the classical algorithm and it has not been specially optimized. Nevertheless, in what follows, we compare our implementation with the one in the \textsc{Maple} package \textsc{Janet} \citep{Robertz} on some nontrivial examples. This package is available to general public. \\

In packages by H.~Cheng et~al. modular (\textsc{Modreduce}) and fraction-free (\textsc{FFreduce}) versions of an order basis of a polynomial matrix $M$ from an Ore algebra $A$ are implemented. In particular, such a basis is used to compute the left nullspace of  $M$, and indirectly the Popov form of $M$.  \\

\noindent
\textbf{Examples.} As we already pointed out in the introduction, behind 
diagonalized matrices and normal forms 
there are various application-driven motivations, see e.~g. \citep{CulianezQuadrat}.

\begin{example}\label{DoublePendulum}
For instance, consider a double pendulum with lengths $\ell_1$ and $\ell_2$. 
Thus $\ell_1, \ell_2$ and $g$ are constants, that is non-zero elements of $K$ 
(for details see \citep{CulianezQuadrat}, Example 3.2.2). The linearization
of this problem leads to the system of linear partial differential equations in $\d=\frac{\d}{\d t}$, which
can be written in the matrix form with the matrix
$$M= \left[ \begin{array}{ccc}
\ell_1 \partial^2+g & 0 & -g \\
0 &            \ell_2 \partial^2+g & -g
\end{array}\right]. 
$$
Since the variable $t$ does not appear in $M$, the ground ring for the diagonalization process can be thought as of $A=\Q(\ell_1,\ell_2,g)[\d]$. Thus, indeed one can compute the Smith normal form. 

Our implementation of the diagonal form of $M$ on this example returns
\begin{footnotesize}
$$U=\left[ \begin{array}{cc} -1/g & 0 \\ -1/g & 1/g \end{array}\right] \mbox{ and }
V=\left[ \begin{array}{ccc}0 & g \ell_2  & -g \ell_2 \partial^2-g^2\\ 0 & g \ell_1  & -g \ell_1 \partial^2-g^2\\
1 & \ell_1 \ell_2 \partial^2+ g \ell_2 & -\ell_1 \ell_2 \partial^4-g \ell_1-g \ell_2 \partial^2-g^2
\end{array}\right]$$
\end{footnotesize}
such that
\begin{footnotesize}
$$ U \, M \, V = \left[ \begin{array}{ccc} 1 & 0 &  0\\0 & g \ell_1- g \ell_2 & 0\end{array}\right].
$$
\end{footnotesize}
This result agrees with results, obtained in \citep{CulianezQuadrat}.
Note, that a purely fractional method (as well as coefficient normalization procedure) will return $1$ instead of $g (\ell_1-  \ell_2)$. With our polynomial approach we obtain a polynomial matrix, which is useful for further investigations. 
In particular, in the current example we see, that setting $\ell_1= \ell_2$ implies the drop of
the rank of the Smith form from 2 to one, thus the properties of the corresponding system will change. In control theory one establishes quite different properties of the module in the non-generic case $\ell_1= \ell_2$.
\end{example}

\begin{remark}
\label{genremark}
In \citep{Genericity} the algorithm for finding so-called ``obstructions to genericity'' was derived
and discussed. A lesson learned from that paper can be applied for an implementation of 
Jacobson form as follows. It is recommended to split the algorithm (resp. the implementation)
into two parts. In the first part one computes
a diagonal matrix, where the invertibles of the ground ring are not canceled artificially. 
The second part applies the normalization on the invertibles; this part 
is trivial to achieve. Note, that our polynomial algorithm allows one to keep a close track on suspicious invertibles
due to this scheme.
\end{remark}




\begin{example}
\label{Ex3dim}
Over the first rational Weyl algebra $\Q(t)[\partial; \id, \frac{d}{dt}]$, consider the matrix 
\begin{footnotesize}
$$R=
\left[ \begin{array}{ccc} 
\partial^2 & \partial+1 &0 \\
\partial+1 & 0 & \partial^3-t^2 \partial \\
2 \partial+1 & \partial^3+\partial^2 & \partial^2
\end {array} \right].
$$
\end{footnotesize}
A typical implementation of the Jacobson normal form
returns the matrix $D = \Diag(g, 1, 1)$ together with transformation matrices 
$U, V \in \Q[t][\partial; \id, \frac{d}{dt}]^{3 \times 3}$ such that
$URV=D$. 
Below, we write down just the leading term of each matrix entry and moreover,
we write ``l.o.t.'' for ``lower order terms'' 
with respect to degree lexicographical ordering on $\Q[t][\partial; \id, \frac{d}{dt}]$.
The implementation of the Algorithm \ref{diagonalPoly} in \textsc{Singular} 
returns $D = \Diag( 2t^2d^8 + 33 \mbox{ l.o.t.}, 1,1)$. The transformation matrices are 
\begin{center}
\begin{footnotesize}
$U=\left[\begin{array}{ccc}
\frac{1}{2}t\partial^{13}+ 24 \mbox{ l.o.t. } &  \frac{1}{2}t\partial^{10}+ 19 \mbox{ l.o.t. } &   \frac{1}{2}t\partial^{11}+ 44 \mbox{ l.o.t. }\\
\frac{1}{2} &   0 &  0\\
-\frac{1}{4}\partial^5 + 2 \mbox{ l.o.t. } &  -\frac{1}{4} \partial^2 &   \frac{1}{4} + 2\mbox{ l.o.t. }
       \end{array}
\right]$, 
\end{footnotesize}
\end{center}
and
\begin{center}
\begin{footnotesize}
$V=\left[\begin{array}{ccc}
2t\partial^2 + 3 \mbox{ l.o.t. } &         2 \partial^2 &    2\partial^2 + 1\mbox{ l.o.t. } \\
-2t\partial^3 + 2 \mbox{ l.o.t. } &   -2 \partial^3 + 3 \mbox{ l.o.t. } & -2 \partial^3 \\
t \partial^8 + 28 \mbox{ l.o.t. } & \partial^8 + 11 \mbox{ l.o.t. } & \partial^8 + 16 \mbox{ l.o.t. }
       \end{array}
\right]$.
\end{footnotesize}
\end{center}
In view of \ref{unimod}, $V$ (but not $U$) is unimodular over $R_*=\Q[t][\partial; \id, \frac{d}{dt}]$.

\textsc{Janet} returns a 
matrix $\Diag(
\begin{footnotesize}
1,1, ( 279936 t^{14} + 14 \mbox{ l.o.t.})^{-1}( 279936t^{14}\partial^8+ 145 \mbox{ l.o.t.})
\end{footnotesize}
)$,       

\[
U=
\begin{footnotesize}
\left[\begin{array}{ccc}
1 & 0 & 0\\
(6t^2 + 2 \mbox{ l.o.t.})^{-1}(\partial^2 + 1\mbox{ l.o.t.})& 
(6t^2 + 2 \mbox{ l.o.t.})^{-1}(\partial^3 + 3\mbox{ l.o.t.})& (6t^2 + 2 \mbox{ l.o.t.})^{-1}\\
u_{31}  & u_{32} & u_{33} \end{array}
\right], \end{footnotesize}
\]
where 
\begin{footnotesize}
$g = (559872 t^{14} + 14 \mbox{ l.o.t.}), u_{31} = g^{-1}(-279936{t}^{14}{\partial}^{9} + 158 \mbox{ l.o.t.}), u_{32} = g^{-1}(279936{t}^{14}{\partial}^{10} + 182 \mbox{ l.o.t.}), u_{33}  = g^{-1}(279936{t}^{14}{\partial}^{7} + 127 \mbox{ l.o.t.})$
\end{footnotesize}. 
The right transformation matrix $V = $
 \begin{center}
 \begin{footnotesize}
 $\left[\begin{array}{ccc}
 1 & \frac{1}{2}\partial^6+ 15 \mbox{ l.o.t. }& 
 (279936t^{14}+14\mbox{ l.o.t. })^{-1}(46656{t}^{12}\partial^{7} + 110\mbox{ l.o.t. }) \\
 \partial+ 1 \mbox{l.o.t.} & -\frac{1}{2}\partial^7+ 15 \mbox{ l.o.t. } & 
 (-1679614t^{16}+16\mbox{ l.o.t. })^{-1}(279936{t}^{14}\partial^{8} + 138\mbox{ l.o.t. })\\
 0 & 1 & (6t^2 + 2 \mbox{ l.o.t. })^{-1} (2\partial^2+ 1 \mbox{ l.o.t })
        \end{array}
 \right]$.
 \end{footnotesize}
 \end{center}
\end{example}

\begin{example}
Consider the matrix from the Example \ref{Ex3dim}, replacing $\partial$ by $S$, 
the forward shift operator in the first rational shift algebra in $t, s$. Then 
the diagonal form, computed with our algorithm is
$\Diag(t^{12} S^{8} + 101$ l.o.t.$,1,1)$.
Notably, the leading coefficient in $t$ factorizes completely. Transformation matrices are, as expected, more complicated as in the Example \ref{Ex3dim}. $U$ has only three entries of length bigger than 1; their lengths are 113, 116, 150. In the matrix $V$, the lengths of entries are
22, 11, 58, 20, 14, 60, 26, 17, 64 with degree in $S$ up to 7. Coefficients, having more than 7 digits appear only in one entry, and grow up to 12 digits. The situation in the first rational difference algebra is similar, as a reader can see by computing with our implementation. We have computed all the examples from this paper in the shift and difference settings as well.
\end{example}

\begin{example}
Let $R=\Q(y,x)[\partial; \id, \frac{d}{dx}]$ and thus $R_*=\Q[y,x][\partial; \id, \frac{d}{dx}]$.
The matrix $M$ below comes from the system of partial differential equations. With our
algorithm we obtain transformation matrices and a diagonal one: 
\[
M=\left[ \begin{array}{cc} y^2 \partial^2 + \partial +1 & 1 \\ x\partial & x^2 \partial^2+\partial + y
\end{array}\right], 
\]
\[
\left[ \begin{array}{cc} -x^2 \partial^2-\d-y & 1\\1& 0 \end{array}\right] \, M \,
\left[ \begin{array}{cc} 1 & 0 \\ -y^2 \d^2-\d-1 & 1 \end{array}\right] =
\left[ \begin{array}{cc} g & 0 \\ 0 & 1 \end{array}\right],
\]
where $g=-y^2x^2 \d^4-x^2 \d^3-x^2 \d^2-y^2 \d^3+x\d+(-y^3-1)\d^2+(-y-1)\d-y$.

If we consider $M \in \Z_2(y,x)[\partial; \id, \frac{d}{dx}]^{2 \times 2}$,
we obtain the single example from \citep{Middeke}. Then the rational form of our result
is exactly the result obtained in \citep{Middeke}, namely $\Diag(1,-\tfrac{g}{x^2y^2} \mod 2)$.
Note, that in our method no computations with $4\times 4$ matrices as in \citep{Middeke} are needed. As demonstrated, our implementation works over finite fields as well. And, as before, the right transformation
matrix $V$ is unimodular even over $R_* = \Q[y,x][\partial; \id, \frac{d}{dx}]$.
\end{example}

As we have seen, in all the examples above the right transformation matrix $V$ was indeed unimodular over $R_*$. We observe this phenomenon for even more examples over Weyl and shift algebras. 
\begin{conjecture}
\label{conjUnimod}
Let $A_*$ be a $G$-algebra and $A=\Quot(A_*)$. Moreover, let $R=A[\partial; \sigma, \delta]$, such that $R_* = A_*[\partial; \sigma, \delta]$ is a $G$-algebra. For a matrix $M\in R^{p\times p}$ there exist square matrices $U,V,D$ with entries from $R_*$, such that $UMV = D$, where $D$ is diagonal and $U,V$ unimodular over $R$. If $D$ has only one polynomial non-constant entry, then $V$ can be chosen to be unimodular over $R_*$.
\end{conjecture}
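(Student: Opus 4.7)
Proof proposal. The plan is to reformulate the statement as a cyclic-module and basis-completion problem over $R_*$, and to isolate the crucial non-commutative Hermite-type property.

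By Lemma \ref{inMod} we may assume $M \in R_*^{p \times p}$. Working with right modules on column vectors (the mirror convention being equivalent), the hypothesis $UMV = D = \Diag(1, \ldots, 1, d)$ implies that the cokernel $N := R^p / M R^p$ is isomorphic to the cyclic $R$-module $R/dR$. Reading $MV = U^{-1}D$ column by column, the last column $w := V e_p$ of $V$ must satisfy $M w = z \cdot d$ for $z := U^{-1} e_p$, and $z$ together with the first $p-1$ columns of $U^{-1}$ must form an $R$-basis of $R^p$. Conversely, given any $w \in R^p$ with $Mw = zd$ and $\bar z$ generating $N$, any basis completion of $w$ to $V$ produces an admissible pair: setting $U^{-1} := [Mv_1 \mid \cdots \mid Mv_{p-1} \mid z]$, surjectivity onto $R^p$ follows from $MR^p + zR = R^p$, and invertibility then follows by a rank-counting argument over $\Quot(R)$ together with torsion-freeness of $R^p$.

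The plan is then to produce such a $w$ already inside $R_*^p$ and to extend it to the last column of a matrix $V \in R_*^{p \times p}$ unimodular over $R_*$. For the first step, a column $w_0 \in R^p$ coming from any existing rational solution can be scaled on the right by a common denominator in $A_*$ to land in $R_*^p$; the Ore condition of Theorem \ref{BOre} allows one to commute the scalar past $d$, and a coprimality check on the scalar ensures that the generator property of $\bar z$ is preserved. The second step asks that $w R_*$ be a direct summand of $R_*^p$ with a free complement of rank $p-1$, equivalently that the unimodular column $w$ extend to an $R_*$-unimodular matrix.

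This completion step is the principal obstacle. In the commutative polynomial case $R_* = K[x_1, \ldots, x_n, \partial]$ it is the Quillen--Suslin theorem. In the non-commutative $G$-algebra setting it reduces to the property that every finitely generated stably free right $R_*$-module of rank $p-1$ is free; this is established for polynomial Weyl algebras by Stafford, for several classes of iterated Ore extensions by subsequent work, and for various quantum algebras by Artamonov, but is not known for arbitrary $G$-algebras. The empirical success reported in the paper over Weyl and shift algebras is fully consistent with this pattern. As a more constructive complement, I would examine whether the matrix $V$ produced by Algorithm \ref{diagonalPoly} itself can, under the single-non-constant-entry hypothesis, be shown to be $R_*$-unimodular directly from the structure of the Gr\"obner bases computed during its execution; success here would yield a uniform algorithmic proof for all $G$-algebras supported by the implementation, bypassing the abstract projectivity argument.
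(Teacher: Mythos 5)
There is no proof to compare against: the statement you were given is Conjecture \ref{conjUnimod}, which the paper leaves open and supports only by the observed behaviour of $V$ in the worked examples over Weyl and shift algebras. So the relevant question is whether your argument closes the conjecture, and it does not. You have produced a reduction, not a proof, and you say so yourself: after translating the claim into the statement that some $w\in R_*^p$ with $Mw = zd$ and $\bar z$ a cyclic generator of the cokernel extends to an $R_*$-unimodular matrix, the completion step is exactly a stably-free-implies-free (unimodular column extension) problem over $R_*$, which is known for special algebras (Quillen--Suslin in the commutative case, Stafford for Weyl algebras) but not for arbitrary $G$-algebras of the kind the conjecture quantifies over. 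Since that is the entire content of the conjecture beyond what Algorithm \ref{diagonalPoly} already delivers, the gap you flag is not a technical loose end but the whole open problem restated in different language.

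Two further steps in your reduction are also weaker than you present them. First, clearing denominators: right-multiplying a rational solution $w_0$ by a common denominator $a\in A_*$ does give a column in $R_*^p$, but in the non-commutative setting the Ore relation replaces $d$ by some $d'$ with $da' = a d'$ (similarity, not equality), and your ``coprimality check on the scalar ensures that the generator property of $\bar z$ is preserved'' is asserted, not argued; over a non-simple localization (cf.\ Examples \ref{cexShift} and \ref{qWeyl}) precisely this kind of cancellation can fail, so the claim needs a genuine proof or a restriction of scope. Second, your converse direction (any basis completion of $w$ yields an admissible pair via rank counting over $\Quot(R)$ plus torsion-freeness) is plausible but sketched; invertibility of the assembled $U^{-1}$ over $R$ needs the cyclicity of the cokernel used explicitly, not just rank over the quotient skew field. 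Your closing suggestion --- to prove $R_*$-unimodularity of the specific $V$ produced by Algorithm \ref{diagonalPoly} directly from the Gr\"obner basis data under the single-non-constant-entry hypothesis --- is in fact the most promising route toward the conjecture as stated, but it remains a proposal rather than an argument.
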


\noindent
\textbf{Application}. Over $R$, the decomposition as above can be applied as follows. We start with a system of equations $M \omega = 0$ in unknown functions $\omega = (\omega_1,\ldots,\omega_p)$. Since $U$ and $V$ are unimodular over $R$ and $UMV=\Diag(d_{11},\ldots,d_{pp})$, we obtain a decoupled system $\{ d_{ii} z_i = 0 \}$, where $z = V^{-1} \omega$, which is 
equivalent to $M \omega = 0$ over $R$. Note, that $d_{ii}=0$ is possible, then one calls $z_i$ a free variable of the system in the literature (e.~g. in \citep{Eva05}).

Let us analyze what can be done over $R_*$. Suppose, that 
$V$ is unimodular over $R_*$. Then $U M \omega = 0 \Leftrightarrow D V^{-1} \omega = 0$. However, since $U$ is not unimodular over $R_*$, we have implication $M \omega = 0 \Rightarrow UM \omega=0$ only. Let $T$ be a matrix, such that $TU=\id_R$, then, by
a reasoning, similar to Lemma \ref{inMod} there exists 
a diagonal matrix $Q=\Diag(\ldots,q_{ii},\ldots)$ such that $Q$ resp. $QT$ have
with entries from $A_*$ resp. $R_*$.
For simplicity, assume that $A_*$ is commutative.
Denote by $S$ the multiplicatively closed set, generated by $q$, the least common multiple of $\{ q_{ii} \}$. If $S$ happens to be an Ore set in $R_*$, then the localization $S^{-1} R_*$ exists and $U$ will be unimodular over $S^{-1} R_*$. Further computations happen in different branches: first in the generic $S^{-1} R_*$, where by $UM \omega = 0 \Rightarrow M \omega = 0$ and then in the case, determined by the relation $q = 0$. 
In the latter, one can apply the algorithm \texttt{Genericity} from \citep{Genericity}, which delivers a disjoint decomposition of the set of zeros of $q$ into locally closed sets $L_i$. One can proceed with analysis of systems $UMw=0$ along $L_i$ and obtain special solutions on each $L_i$. 
This shows, that the left transformation matrix $U$ in this setting carries essential information about the so-called singularities of a system. Note, that working over $R$ we compute only
generic information, while following the polynomial strategy over $R_*$ allows us to make a complete description of the system.

Clearly the decoupling, provided by a diagonal form, is of big importance for solving systems of operator equations with rational coefficients and for the structural analysis, performed in the algebraic system and control theory (see e.~g. Theorem 8 of \citep{Eva05}).

\section{Conclusion and Future Work}

Indeed, this paper is a part of a general program on providing effective
computations within Ore localized $G$-algebras. Notably, polynomial strategy,
which we described in details for the case of one polynomial variable, 
is one of the key elements of the program. There is ongoing work on
the implementation of Gr\"obner bases for Ore localized $G$-algebras under
the codename \textsc{Singular::Locapal}. \\

Polynomial strategy brings us several advantages in practical computations.
One of them is the generality of the overall approach. Namely, as soon
as there is an implementation of Gr\"obner bases for modules (and hence
syzygies) over a $G$-algebra $A$, under some mild assumptions we are able to work
effectively with Ore localization $A_{B^*}$ of $A$ with respect to a multiplicatively closed
Ore set $B^*$, where $B$ is a suitable $G$-subalgebra of $A$ (cf. Theorem \ref{BOre}).

The question, whether direct computations with fractions of $A_{B^*}$ will be always
outperformed by the polynomial strategy, is still open. Consider, for instance, the situation,
where the input matrix $M$ is given already with rational non-commutative coefficients. Then bringing $M$
to the fraction-free form is already a nontrivial operation (as soon as we work with non-commutative algebra), as indicated e.~g. in the proof of Lemma \ref{inMod}. 

In our opinion the answer to the above question depends both on the algebra $A_{B^*}$ and on the
presentation matrix $M$. However, in general nontrivial computation directly using fractions in
the algorithm might cause the appearance of enormous coefficients, as several
examples demonstrate. We want to stress, that these examples have not been
specially selected for this purpose; instead, we picked a couple of them from
a bigger family of examples. In our opinion, this phenomenon is quite
ubiquitous. \\

Our implementation of the Jacobson normal form will be developed further
to provide a user with the possibility to compute in more general algebras. At the
moment, the stable version of the library \citep{Jacobsonlib} supports first Weyl, shift and difference algebras. Investigation of normal forms over non-simple domains (as in \ref{cexShift}, \ref{qWeyl}) is an important future task.

Middeke \citep{Middeke} has reported, that the classical algorithm, computing Jacobson form
of a matrix over the Weyl algebra over a differential field is polynomial-time.
However, it seems to us (due to polynomial strategy approach), that the 
subalgebra of invertible elements must be involved in the complexity analysis.
Perhaps one should consider different models for studying complexity, since
experience with practical applications suggests, that the important role, played
by the coefficient arithmetics (which is not the arithmetics over a numerical
field anymore!) must be appropriately reflected in the overall complexity. Otherwise
the complexity of operations over the skew field of invertible elements remains hidden. \\

Recently, Mark Giesbrecht and George Labahn suggested 
the use of another technique from \citep{Kaltofen}, namely the randomization. Starting with a matrix $M$, 
one multiplies $M$ with random square (hence unimodular) matrices from both sides, in order to reduce the number of iterations in the Algorithms 1 and 2. Some experiments confirm that this might be generalized to the setting of localized $G$-algebras. However, the computations become much harder in practice due to increased size of polynomials to deal with. This is another reason for our proposal to investigate the different notions of complexity of operations over skew fields.

\section*{Acknowledgments}

The authors are very grateful to Eva Zerz and Hans Sch\"onemann for their advice on
numerous aspects of the problems, treated in this article. We thank to Daniel Robertz, Johannes Middeke and Howard Cheng for explanations about respective implementations. 


\end{document}